\NeedsTeXFormat{LaTeX2e}
\documentclass[12pt]{amsart}
\usepackage[all]{xy}
\usepackage{a4wide}
\usepackage{amssymb}
\usepackage{amsthm}
\usepackage{amsmath}
\usepackage{amscd}
\usepackage{verbatim}
\usepackage[all]{xy}
\usepackage{framed,color}

\usepackage{graphicx}
\usepackage{subfig}

\usepackage{amsmath,amsthm,amssymb}
\usepackage{enumitem}

\usepackage{float}
\usepackage{ifthen}
\usepackage{xargs}
\usepackage{url}
\usepackage[all]{xy}
\usepackage{xcolor}
\usepackage{comment}
\usepackage[obeyDraft,textsize=tiny]{todonotes}
\specialcomment{draftnote}{\vspace{1em}\begingroup\ttfamily\footnotesize\color{blue}\begin{minipage}{15cm}}{\end{minipage}\endgroup\vspace{1em}}
\specialcomment{alert}{\vspace{1em}\begingroup\ttfamily\footnotesize\color{blue}\begin{minipage}{15cm}}{\end{minipage}\endgroup\vspace{1em}}

\usepackage[refpage,intoc,noprefix]{nomencl}

\newcommand{\R}{\mathbb{R}} 
\newcommand{\C}{\mathbb{C}} 
\newcommand{\Z}{\mathbb{Z}} 

\newcommand{\calE}{\mathcal{E}}

\def\({\left(}
\def\){\right)}

\def\lp{\left(}
\def\rp{\right)}

\newtheorem{theorem}{Theorem}[section]
\newtheorem{proposition}[theorem]{Proposition}
\newtheorem{lemma}[theorem]{Lemma}
\newtheorem{corollary}[theorem]{Corollary}
\newtheorem{question}[theorem]{Question}

\theoremstyle{definition}

\newtheorem{definition}[theorem]{Definition}

\newtheorem{remark}[theorem]{Remark}

\usepackage{comment}

\usepackage[obeyDraft,textsize=tiny]{todonotes}

\title{The Riemann Hypothesis for period polynomials of cusp forms}

\author{William Craig}
\address{Mathematical Institute,
Universit\"{a}t zu Köln}
\email{wcraig@uni-koeln.de}

\author{Wissam Raji}
\address{Department of Mathematics at the American University of Beirut (AUB), Founder of Number Theory unit at Center for Advanced Mathematical Sciences (CAMS)}
\email{wr07@aub.edu.lb}

\begin{document}
\maketitle

\begin{abstract}
    We consider the period polynomials $r_f(z)$ associated with cusp forms $f$ of weight $k$ on all of $\mathrm{SL}_2\lp \Z \rp$, which are generating functions for the critical $L$-values of the modular $L$-function associated to $f$. In 2014, El-Guindy and Raji proved that if $f$ is an eigenform, then $r_f(z)$ satisfies a ``Riemann hypothesis" in the sense that all its zeros lie on the natural boundary of its functional equation. We show that this phenomenon is not restricted to eigenforms, and we provide large natural infinite families of cusp forms whose period polynomials almost always satisfy the Riemann hypothesis. For example, we show that for weights $k \geq 120$, linear combinations of eigenforms with positive coefficients always have unimodular period polynomials.
\end{abstract}

\section{Introduction}

Let $S_k$ be the space of cusp forms having weight $k$ and level 1. In this paper, we study the period polynomial of $f$. To define the period polynomial of $f \in S_k$ with Fourier expansion $f(z) = \sum_{n \geq 1} a_n q^n$, $q := e^{2\pi i z}$, consider the {\it Eichler integral} of $f$, which is given by
\begin{align*}
    \calE_f(z):=\int_z^{i\infty}f(\tau)(\tau-z)^{k-2}\, d\tau=-\frac{(k-2)!}{(2\pi i)^{k-1}}\sum_{n=1}^\infty \frac{a_n}{n^{k-1}}e^{2\pi i nz}.
\end{align*}
The {\it period polynomial} of $f$ is the polynomial of degree (at most) $k-2$ defining the error to modularity of the Eichler integral $\mathcal E_f(z)$, i.e.
\begin{align*}
    r_f(z) := \mathcal E_f(z) - z^{k-2} \mathcal E_f\lp - \dfrac{1}{z} \rp.
\end{align*}
By considering the Fourier expansion of $f$, it is easy to deduce that $r_f(z)$ is a generating function for the critical $L$-values of the modular $L$-function $L(f,s)$. More specifically, we have
\begin{align*}
	r_f\lp z \rp = - \dfrac{\lp k-2 \rp!}{\lp 2\pi i \rp^{k-1}} \sum_{n=0}^{k-2} \dfrac{\lp 2\pi i z \rp^n}{n!} L\lp  f, k - n - 1 \rp=\sum_{n=1}^{k-2}i^{n+k-1}\Lambda(f,k-n-1) z^n,
\end{align*}
where $$\Lambda(f,s):=(2\pi)^{-s}\Gamma(s)L(f,s)$$ is the completed $L$-function satisfying the functional equation $\Lambda(f,s)=\epsilon(f)\Lambda(f,k-s)$ with $\epsilon(f)=\pm 1$.
The theory of period polynomials has important implications for the theory of modular forms and their $L$-functions. For example, Manin has shown \cite{Manin} that the critical $L$-values occuring in the coefficients of $r_f(z)$ satisfy the following rationality conditions.
\begin{theorem}
Let f be a normalized cuspidal Hecke eigenform with rational Fourier coefficients. Then there exist $\omega_{\pm}(f)\in \mathbb{R}$ such that $\Lambda(f,s)/\omega_{+}(f)$ and $\Lambda(f,r)/\omega_{-}(f)$ are rational for all $s,r$ such that $1\leq s,r\leq k-1$ and $s$ even, $r$ odd.
\end{theorem}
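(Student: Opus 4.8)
The plan is to deduce this from Eichler--Shimura theory, using that the period polynomial $r_f(z)$ is the value of a group cocycle and that both the Hecke action and the substitution $z \mapsto -z$ respect the natural rational structure on cohomology. Let $W$ be the $\Q$-vector space of polynomials of degree at most $k-2$, equipped with the right $\mathrm{SL}_2(\Z)$-action $(P|\gamma)(z) = (cz+d)^{k-2}P\!\left(\frac{az+b}{cz+d}\right)$. Extending this slash action to functions on $\h$, the map $\gamma \mapsto \mathcal{E}_f - \mathcal{E}_f|\gamma$ is a parabolic $1$-cocycle on $\mathrm{SL}_2(\Z)$ with values in $W \otimes \C$; its value at $T = \begin{pmatrix} 1 & 1 \\ 0 & 1\end{pmatrix}$ vanishes because $\mathcal{E}_f$ is $1$-periodic, and its value at $S = \begin{pmatrix} 0 & -1 \\ 1 & 0\end{pmatrix}$ is exactly $r_f$. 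The Eichler--Shimura isomorphism identifies $S_k \oplus \overline{S_k}$ with the parabolic cohomology $H^1_{\mathrm{par}}(\mathrm{SL}_2(\Z), W \otimes \C)$ via $f \mapsto [r_f]$, and this identification is equivariant for all Hecke operators $T_n$. Crucially, $H^1_{\mathrm{par}}(\mathrm{SL}_2(\Z), W)$ is a $\Q$-structure on this space, the operators $T_n$ are defined over $\Q$, and the involution $\iota$ induced by $\varepsilon = \begin{pmatrix} -1 & 0 \\ 0 & 1\end{pmatrix}$, which acts on $W$ by $P(z) \mapsto P(-z)$, is likewise defined over $\Q$.

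Now suppose $f = \sum a_n q^n$ is a normalized Hecke eigenform with $a_n \in \Q$; then $\overline f = f$, and multiplicity one implies that the subspace $V_f \subseteq H^1_{\mathrm{par}}(\mathrm{SL}_2(\Z), W \otimes \C)$ on which each $T_n$ acts by the scalar $a_n$ is two-dimensional and contains $[r_f]$. Because the $a_n$ are rational and the $T_n$ are defined over $\Q$, $V_f$ is the complexification of a two-dimensional $\Q$-subspace $V_f(\Q)$; the involution $\iota$ is rational and commutes with the Hecke action, so it preserves $V_f(\Q)$, and by Eichler--Shimura together with multiplicity one each of its eigenspaces $V_f^{\pm}(\Q)$ is one-dimensional over $\Q$. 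Fix generators $c_f^{\pm} \in V_f^{\pm}(\Q)$, i.e.\ polynomials with rational coefficients. Writing $r_f = r_f^+ + r_f^-$ for the decomposition into even and odd parts in $z$, the class of $r_f^+$ lies in $V_f^+ \otimes \C$ and that of $r_f^-$ in $V_f^- \otimes \C$; since these are one-dimensional over $\C$ there exist $\omega_{\pm}(f) \in \C$ with $r_f^{\pm} = \omega_{\pm}(f)\, c_f^{\pm}$ as polynomials. Hence every coefficient of $r_f^{\pm}$ is a rational multiple of $\omega_{\pm}(f)$. Reading off the coefficient of $z^n$ from $r_f(z) = \sum_{n=1}^{k-2} i^{n+k-1}\Lambda(f,k-n-1)z^n$, and observing that $i^{n+k-1}$ is a fixed fourth root of unity on each parity class of $n$ (which we absorb into $\omega_{\pm}(f)$), we conclude that $\Lambda(f,s)/\omega_{+}(f)$ is rational for the relevant parity of $s$ and $\Lambda(f,r)/\omega_{-}(f)$ for the other; matching parities with the statement uses that $k$ is even, and the remaining critical point $s = k-1$ is reduced to $s = 1$ via $\Lambda(f,s) = \epsilon(f)\Lambda(f,k-s)$.

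Finally, $\omega_{\pm}(f)$ can be taken real: since the $a_n$ are real, $L(f,s)$ is real for real $s$ in the half-plane of absolute convergence, so $\Lambda(f,\bar s) = \overline{\Lambda(f,s)}$ by analytic continuation, and hence every critical value $\Lambda(f,s)$, $1 \leq s \leq k-1$, is real; dividing such a value by a nonzero rational coefficient of $c_f^{\pm}$ then exhibits $\omega_{\pm}(f) \in \R$ (after rescaling $c_f^{\pm}$ by a power of $i$ if needed to absorb the factors $i^{n+k-1}$). The one real obstacle in this argument is assembling the Eichler--Shimura input: one needs not just the Hecke-equivariant isomorphism $H^1_{\mathrm{par}}(\mathrm{SL}_2(\Z), W \otimes \C) \cong S_k \oplus \overline{S_k}$ but also its compatibility with the $\Q$-structure on $W$ and the rationality of the $T_n$ and of $\iota$; combined with multiplicity one, this is exactly what makes the $f$-isotypic piece a two-dimensional $\Q$-rational subspace splitting into two one-dimensional $\pm$-eigenspaces, after which the result is bookkeeping with the explicit coefficients of $r_f$.
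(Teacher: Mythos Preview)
The paper does not prove this statement at all: it is quoted in the introduction as a classical theorem of Manin, with a reference to \cite{Manin}, purely to motivate the study of period polynomials. There is therefore no proof in the paper to compare your argument against.

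That said, your sketch is the standard Eichler--Shimura argument and is correct in its essentials. One point you might tighten: you pass from the equality of cohomology classes $[r_f^{\pm}] = \omega_{\pm}(f)\,[c_f^{\pm}]$ to an equality \emph{of polynomials}, but cocycles vanishing at $T$ are only determined up to the one-dimensional coboundary space spanned by $z^{k-2}-1$. Since $k$ is even this coboundary is even, so the odd part $r_f^-$ is genuinely pinned down as a polynomial and your argument for $\omega_-(f)$ is clean; for the even part the ambiguity affects only the coefficients of $z^0$ and $z^{k-2}$, i.e.\ the extreme critical values, which you already handle via the functional equation. With that small clarification the argument goes through.
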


In this paper, we focus on the zeros of period polynomials. This study was initiated by Conrey, Farmer and Imamoglu \cite{conreyfarmerimamoglu}, who showed that the odd period polynomial $r_f^-(z)$ for $f \in S_k$ an eigenform have all its nontrivial zeros on the unit disk. Shortly thereafter, El-Guindy and the second author \cite{elguindyraji} showed that all the zeros of $r_f(z)$ are on the unit disk if $f \in S_k$ is an eigenform. Jin, Ma, Ono and Soundararajan \cite{jinmaonosound} proved the analogous result for newforms of level $N$ and even weights, and Liu, Park and Song \cite{liuparksong} considered the problem for newforms of any level, Nebentypus, and weight. Because $r_f(z)$ obeys a functional equation under the change of variables $z \mapsto 1/\sqrt{N}z$ for cusp forms of level $N$, these theorems became known as the ``Riemann hypothesis for period polynomials." We shall also use this terminology frequently. Many other authors have considered similar problems; see for example \cite{BRW, diamantisrolen1, diamantisrolen2, imkim} for various related problems.

All previous investigations into the Riemann hypothesis for period polynomials of which the authors are aware have focused on the case of eigenforms (or newforms in the case of higher levels). We prove theorems of this type for very broad classes of cusp forms. In this direction, our first main result is an elementary and completely explicit sufficient condition for the Riemann hypothesis for the period polynomial $r_f(z)$ of an arbitrary cusp form $f$. Since $r_{Af}(z) = A r_f(z)$ for any constant $A$, we phrase our results for normalized cusp forms for the sake of simplicity.

\begin{theorem} \label{Main Theorem}
    Let $N \geq 1$, $k \geq 12$ be integers, and let $f(z) = q^N + \sum_{n > N} c(n) q^n$ be a nonzero normalized cusp form. Let $C_f$ the minimal constant such that $\left| c(n) \right| \leq C_f \sigma_0(n) n^{\frac{k-1}{2}}$ for all $n \geq N$. Suppose further that
    \begin{align} \label{Key inequality 0}
        \dfrac{4 e^{2\pi} C_f}{\lp N+1 \rp^{k/4}} + \dfrac{4 e^{2\pi} C_f \lp 2\pi \rp^{\lfloor \frac{k}{4} \rfloor} \left[ \sqrt{k} \log\lp 2k \rp + 1 \right]}{\lp \lfloor \frac{k}{4} \rfloor \rp!} &+ \dfrac{e^{2\pi N} \lp 2\pi N \rp^{\lfloor \frac k4 \rfloor}}{N^{k-1} \lp \lfloor \frac k4 \rfloor \rp!} \\ \notag &< \dfrac{1}{N^{k-1}} \lp e^{-2\pi N} - \dfrac{\lp 2\pi N \rp^{k/2}}{\lp k/2 \rp!} e^{2\pi N} \rp
    \end{align}
    and that the criteria of Proposition \ref{H zeros theorem} are satisfied for $m = \frac{k}{2} - 1$. Then $r_f(z)$ satisfies the Riemann hypothesis.
\end{theorem}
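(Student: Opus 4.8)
The plan is to compare $r_f$ on the unit circle with an explicit ``model'' built from the single leading Fourier coefficient $c(N)=1$, and to bound the difference with the Deligne‑type estimate $|c(n)|\le C_f\sigma_0(n)n^{(k-1)/2}$ that is packaged into $C_f$. Concretely I would start from $r_f(z)=\calE_f(z)-z^{k-2}\calE_f(-1/z)$ and split off the term $n=N$, writing $r_f=M+E$ with
\[
M(z)=\frac{(k-2)!}{(2\pi i)^{k-1}N^{k-1}}\left(z^{k-2}e^{-2\pi iN/z}-e^{2\pi iNz}\right),\qquad E(z)=\sum_{n>N}\frac{(k-2)!\,c(n)}{(2\pi i)^{k-1}n^{k-1}}\left(z^{k-2}e^{-2\pi in/z}-e^{2\pi inz}\right).
\]
On the closed upper unit semicircle the series for $E$ converges absolutely, $\operatorname{Im}(-1/z)=\operatorname{Im}(z)$, and so $|E(z)|\le \tfrac{2(k-2)!\,C_f}{(2\pi)^{k-1}}\sum_{n>N}\sigma_0(n)n^{-(k-1)/2}e^{-2\pi n\operatorname{Im}z}$; after normalising by $(k-2)!(2\pi)^{-(k-1)}$ the tail of this sum is exactly the first term $4e^{2\pi}C_f(N+1)^{-k/4}$ on the left of \eqref{Key inequality 0}. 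The lower semicircle will be recovered from $r_f(\bar z)=\overline{r_f(z)}$, which holds for the forms with real Fourier coefficients making up our families (and to which the general statement reduces by separating the real and imaginary parts of $c(n)$).

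Next I would pass from the transcendental $M$ to a genuine polynomial in order to count zeros: truncating each exponential appearing in $M$ after $\lfloor k/4\rfloor$ terms produces a constant multiple of $P_0(z)=p_N(z)-z^{k-2}p_N(-1/z)$ with $p_N(z)=\int_0^{i\infty}e^{2\pi iN\tau}(\tau-z)^{k-2}\,d\tau$ replaced by its degree‑$\lfloor k/4\rfloor$ truncation. This polynomial is self‑inversive of degree $k-2$, and it is precisely the one whose $z\mapsto z+z^{-1}$ Chebyshev transform is the family $H$ of Proposition~\ref{H zeros theorem} with $m=\tfrac k2-1$; since its criteria are assumed for this $m$, $H$ has all its zeros in $[-2,2]$, so the degree‑$(k-2)$ model is unimodular, its $k-2$ zeros interlace on $|z|=1$, and (after multiplying by the unimodular factor that makes it real) it is bounded below in absolute value at each local extremum between consecutive zeros by the right‑hand side of \eqref{Key inequality 0}. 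The cost of these truncations is controlled by the elementary bound $\sum_{\ell>\lfloor k/4\rfloor}x^\ell/\ell!\le x^{\lfloor k/4\rfloor}e^{x}/\lfloor k/4\rfloor!$: with $x=2\pi N$ it produces the third term $e^{2\pi N}(2\pi N)^{\lfloor k/4\rfloor}/(N^{k-1}\lfloor k/4\rfloor!)$ of \eqref{Key inequality 0}, and applied uniformly in $n$, together with $|c(n)|\le C_f\sigma_0(n)n^{(k-1)/2}$ and a dyadic count of the $n$ with $2\pi n\le k$ (whence the factor $\sqrt k\log(2k)+1$), it produces the second term $4e^{2\pi}C_f(2\pi)^{\lfloor k/4\rfloor}[\sqrt k\log(2k)+1]/\lfloor k/4\rfloor!$.

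Adding the three estimates, the left side of \eqref{Key inequality 0} will be an upper bound for the total distance from $r_f$ to the unimodular model on $|z|=1$, while the right side is a lower bound for the model at each of its extrema between consecutive unit‑modulus zeros. Hence \eqref{Key inequality 0} forces $r_f$ to agree in sign with the model at all of those extremal points, so $r_f$ has at least $k-2$ zeros on $|z|=1$; as $\deg r_f\le k-2$, these are all of its zeros, which is the asserted Riemann hypothesis. The functional equation $r_f(z)=-z^{k-2}r_f(-1/z)$ is consistent with this (it pairs zeros $z_0\leftrightarrow -1/z_0$, so a single zero off the circle would force a second one strictly outside), giving the equivalent phrasing that it is enough to show $r_f$ has no zero in $|z|<1$, which the same count delivers.

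The hard part will be exactly this passage from the analytic model $M$ to a polynomial $H$ that is simultaneously unimodular \emph{and} close to $r_f$ uniformly on the whole unit circle — in particular near $z=\pm1$, where $M$ vanishes and the naive exponential domination of $|M|$ over $|E|$ collapses. That is why the error has to be measured against the local extrema of $H$ rather than against $|H(z)|$ pointwise, and why \eqref{Key inequality 0} must carry three separate terms: one for the Dirichlet tail of $E$ and two for the two truncation errors, the latter being precisely the contributions that dominate near $z=\pm1$. Verifying that their sum genuinely beats the extremal lower bound coming out of Proposition~\ref{H zeros theorem} — which is the content of \eqref{Key inequality 0} itself — is the crux of the whole argument.
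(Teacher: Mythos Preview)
Your overall instinct---compare $r_f$ to an explicit model built from the single leading coefficient, bound the discrepancy in three pieces, and conclude by zero-counting---matches the paper, but the mechanism you propose for the zero-counting step is not the one that actually works, and this is a genuine gap.

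The paper does \emph{not} compare $r_f$ itself to a unimodular model via a sign-change argument at extremal points. Instead it passes through the auxiliary polynomial $q_f(z)$ of degree $m=\tfrac{k}{2}-1$ defined in \eqref{q-def}, using the self-reciprocal decomposition $i^k p_f(z)=z^m q_f(z)+i^k z^m q_f(1/z)$ together with the Cohn--Lal\'{i}n--Smyth criterion (Proposition~\ref{Unimodularity Criterion}): if $q_f$ has all zeros in the closed unit disk, $r_f$ is automatically unimodular. One then compares $q_f$ to $N^{-(w+1)}H_{m,N}$ by Rouch\'{e} on $|z|=1$. Your claim that the truncated polynomial model ``is precisely the one whose $z\mapsto z+z^{-1}$ Chebyshev transform is the family $H$'' is not correct; $H_{m,N}$ is the reversal of the truncated exponential $T_{m,N}$, and it approximates $q_f$, not a Chebyshev image of a full-degree model. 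Without the $q_f$ reduction you have no unimodular polynomial of degree $k-2$ to compare against.

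Relatedly, the right-hand side of \eqref{Key inequality 0} is a \emph{uniform} lower bound for $|N^{-(w+1)}H_{m,N}(z)|$ on $|z|=1$ (from $|T_{m,N}(1/z)|\ge e^{-2\pi N}-\tfrac{(2\pi N)^{m+1}}{(m+1)!}e^{2\pi N}$), precisely what Rouch\'{e} needs; it is not an extremal-value bound for a full-degree unimodular model, so your sign-change strategy would not match the inequality as stated. Your attribution of the three left-hand terms is also off: they arise from splitting the sum defining $q_f$ at $n=\lfloor k/4\rfloor$ and applying the two halves of Lemma~\ref{L-function lemma}. The first term is $|L(f,\sigma)-N^{-\sigma}|$ for $\sigma\ge 3k/4$; the second is the full bound $|L(f,\sigma)|$ for $k/2\le\sigma<3k/4$, where the factor $\sqrt{k}\log(2k)$ comes from the Conrey--Farmer--Imamoglu incomplete-Gamma estimate for $L(f,k/2)$ (via $\sum_{n\le k}\sigma_0(n)n^{-1/2}$), not a dyadic count of truncation errors; the third is simply the tail $\sum_{n\ge\lfloor k/4\rfloor}N^{-(w-n+1)}(2\pi)^n/n!$.
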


\begin{remark}
        The existence of such a constant $C_f$ is not obvious {\it a priori}, but is a famous consequence of Deligne's proof of the Weil conjectures. We also note that although the Sturm basis for $S_k$ implies that $C_f$ can be arbitrarily large, a normalized cusp form $f$ cannot have arbitrarily small values of $C_f$.
\end{remark}

Although Theorem \ref{Main Theorem} is not a necessary condition for $r_f(z)$ to satisfy the Riemann hypothesis, this test gives immediate insight into the period polynomials of large families of cusp forms. We discuss two particular applications of Theorem \ref{Main Theorem}. The first two give natural families of cusp forms whose period polynomials satisfy the Riemann hypothesis for sufficiently large weights.

\begin{theorem} \label{Positive sums theorem}
    Let $k \geq 120$ be an even integer, and let $r$ be the dimension of the complex vector space $S_k$ of cusp forms. Suppose $f_1, \dots, f_r$ is the normalized eigenbasis of $S_k$, and suppose $c_1, \dots, c_r$ are non-negative real numbers, not all zero. Then if $f = c_1 f_1 + \dots + c_r f_r$, then $r_f(z)$ satisfies the Riemann hypothesis.
\end{theorem}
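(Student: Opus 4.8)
The plan is to derive Theorem~\ref{Positive sums theorem} directly from Theorem~\ref{Main Theorem}; the role of the positivity hypothesis on the $c_i$ is precisely to force the constant $C_f$ to be as small as it can be. Since $r_{Af}(z) = A\,r_f(z)$, I would first rescale so that $c_1 + \dots + c_r = 1$ (the $c_i$ are non-negative and not all zero, so their sum is positive). Writing each normalized eigenform as $f_i = q + \sum_{n \geq 2} a_i(n) q^n$, the combination $f = \sum_i c_i f_i = q + \sum_{n \geq 2} c(n) q^n$ is then a normalized cusp form with $N = 1$ and $c(n) = \sum_i c_i a_i(n)$.

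The key point is the bound on $C_f$. By Deligne's estimate for the Fourier coefficients of a normalized Hecke eigenform --- the ``famous consequence of Deligne's proof of the Weil conjectures'' mentioned in the remark following Theorem~\ref{Main Theorem} --- one has $|a_i(n)| \leq \sigma_0(n) n^{(k-1)/2}$ for every $i$ and every $n \geq 1$. Since the $c_i$ are non-negative with $\sum_i c_i = 1$, the triangle inequality gives
\[
|c(n)| = \Bigl| \sum_i c_i a_i(n) \Bigr| \leq \sum_i c_i |a_i(n)| \leq \Bigl(\sum_i c_i\Bigr)\sigma_0(n) n^{(k-1)/2} = \sigma_0(n) n^{(k-1)/2},
\]
so that $C_f \leq 1$. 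This is where positivity enters essentially: for a general linear combination the triangle inequality produces a factor $\sum_i |c_i|$ that need bear no relation to the leading coefficient, and $C_f$ can then be arbitrarily large (cf.\ the Sturm-basis remark).

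It then remains to verify that the hypotheses of Theorem~\ref{Main Theorem} hold with $N = 1$ and $C_f \leq 1$ for every even $k \geq 120$. The left-hand side of \eqref{Key inequality 0} is increasing in $C_f$ while its right-hand side is independent of $C_f$, so it suffices to check \eqref{Key inequality 0} with $N = 1$ and $C_f = 1$. For $k \geq 120$ the subtracted term $(2\pi)^{k/2} e^{2\pi}/(k/2)!$ on the right is negligible (the sequence $(2\pi)^m/m!$ is decreasing for $m \geq 6$), so the right-hand side is essentially $e^{-2\pi}$. On the left, the term $4e^{2\pi}/2^{k/4}$ is tiny and decreasing in $k$, and the two remaining summands are controlled by $(2\pi)^{\lfloor k/4\rfloor}/(\lfloor k/4\rfloor)!$ times the slowly growing factor $\sqrt{k}\log(2k)+1$; passing from $k$ to $k+4$ multiplies the factorial factor by $2\pi/(\lfloor k/4\rfloor + 1) \leq 2\pi/31 < 1/4$, which overwhelms the growth of $\sqrt{k}\log(2k)$. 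Hence the whole left-hand side is decreasing in $k$ for $k \geq 120$, and the verification reduces to a short numerical check at the first few even weights $k \in \{120, 122\}$ (the left-hand side comes out around $5 \times 10^{-4}$, comfortably below $e^{-2\pi} \approx 1.9 \times 10^{-3}$). Finally, the criteria of Proposition~\ref{H zeros theorem} for $m = k/2 - 1$ involve only the weight $k$, not $f$, and I would confirm that they hold for all even $k \geq 120$; this, together with the inequality just discussed, is exactly what fixes the threshold $120$. With every hypothesis of Theorem~\ref{Main Theorem} in place, the conclusion follows.

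I expect the main obstacle to be not the conceptual step --- $C_f \leq 1$ is immediate from positivity and Deligne --- but the bookkeeping: pinning down the constants in \eqref{Key inequality 0}, making the monotonicity argument rigorous enough to reduce to finitely many $k$, and checking the weight-only criteria of Proposition~\ref{H zeros theorem} all the way down to $k = 120$.
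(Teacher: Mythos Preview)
Your approach is correct and essentially identical to the paper's: normalize so that $\sum c_i = 1$, use Deligne's bound and the non-negativity of the $c_i$ to get $N=1$ and $C_f \leq 1$, and then verify the inequality \eqref{Key inequality 0} (together with the weight-only hypothesis of Proposition~\ref{H zeros theorem}, which holds for $m \geq 20$, i.e.\ $k \geq 42$) for all even $k \geq 120$. The paper simply asserts that the resulting inequality $\alpha_{k,1} + \beta_k + \gamma_{k,1} < \delta_{k,1}$ holds for $k \geq 120$, whereas you spell out the monotonicity-plus-base-case argument; the two proofs are the same in substance.
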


\begin{remark}
     Since $r_{Af}(z) = A r_f(z)$ for any constant $A$, the condition $c_j \geq 0$ in (1) could be replaced with the assumption that $c_j \in \C$ have identical arguments (apart from those $c_j$ equal to 0). In particular, the results also hold if we assume $c_j \leq 0$.
\end{remark}

While this first result restricts to the case of linear combinations with positive coefficients, certain very general theorems can still be proven for more general linear combinations of eigenforms which permit both positive and negative coefficients. This is made explicit in the following theorem.

\begin{theorem} \label{Constellations}
    Let ${\bf c} = (c_1, \dots, c_m)$ and ${\bf a} = (a_1, \dots, a_n)$ be vectors of non-negative real numbers such that $\sum_{\ell=1}^m c_\ell > \sum_{j=1}^n a_j$. For $k \geq 12$ an integer, let $f_1, \dots, f_r$ be the eigenforms and let $$\mathcal F_k({\bf c}, {\bf a}) := \{ c_1 f_{i_1} + \dots + c_m f_{i_m} - a_1 f_{j_1} - \dots - a_n f_{j_n} : 1 \leq i_1, \dots, i_m, j_1, \dots, j_n \leq r \text{ distinct} \}$$ and $$\mathcal F({\bf c}, {\bf a}) := \bigcup_{k \geq 0} \mathcal F_k({\bf c}, {\bf a}).$$ Then there is a constant $k_0$ depending on $\mathbf{c}, \mathbf{a}$ such that for all $k \geq k_0$, $r_f(z)$ satisfies the Riemann hypothesis for all $f \in \mathcal F_k\lp \mathbf{c}, \mathbf{a} \rp$. In particular, at most finitely many $f \in \mathcal F\lp \mathbf{c}, \mathbf{a} \rp$ have the property that $r_f(z)$ does not satisfy the Riemann hypothesis.
\end{theorem}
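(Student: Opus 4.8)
The plan is to reduce this statement to Theorem \ref{Main Theorem} by showing that every $f \in \mathcal F_k(\mathbf c, \mathbf a)$ is (up to normalization) a nonzero normalized cusp form whose constant $C_f$ stays bounded independently of $k$, while the right-hand side of \eqref{Key inequality 0} is eventually dominated by a positive quantity bounded below and the left-hand side tends to $0$. First I would observe that for $f = c_1 f_{i_1} + \dots + c_m f_{i_m} - a_1 f_{j_1} - \dots - a_n f_{j_n}$ with distinct indices, the leading Fourier coefficient (the coefficient of $q$) equals $\sum_\ell c_\ell - \sum_j a_j =: s > 0$ by hypothesis, so $f$ is nonzero and $N = 1$; after dividing by $s$ we obtain a normalized cusp form $\tilde f = q + \sum_{n>1} c(n) q^n$ with $r_{\tilde f}(z) = s^{-1} r_f(z)$, so the Riemann hypothesis for $r_f$ and for $r_{\tilde f}$ are equivalent. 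The key point is to bound $C_{\tilde f}$: each normalized eigenform $f_i$ has Fourier coefficients satisfying $|a_i(n)| \leq \sigma_0(n) n^{(k-1)/2}$ by Deligne's bound, so the coefficients of $f = \sum \pm c_\ell f_{i_\ell}$ satisfy $|c_f(n)| \leq \big(\sum_\ell c_\ell + \sum_j a_j\big)\sigma_0(n) n^{(k-1)/2}$, hence $C_{\tilde f} \leq \big(\sum_\ell c_\ell + \sum_j a_j\big)/s =: C_0$, a constant depending only on $\mathbf c$ and $\mathbf a$.

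Next I would verify the inequality \eqref{Key inequality 0} for $N = 1$ and all sufficiently large $k$. With $N = 1$ the right-hand side is $e^{-2\pi} - (2\pi)^{k/2} e^{2\pi}/(k/2)!$, which converges to $e^{-2\pi} > 0$ as $k \to \infty$; in particular it is bounded below by $\tfrac12 e^{-2\pi}$ for $k$ large. On the left-hand side, the first term is $4 e^{2\pi} C_f / 2^{k/4} \to 0$; the third term is $e^{2\pi}(2\pi)^{\lfloor k/4\rfloor} / (\lfloor k/4\rfloor)! \to 0$ since factorials beat geometric growth; and the second term is $4 e^{2\pi} C_f (2\pi)^{\lfloor k/4\rfloor}[\sqrt k \log(2k) + 1]/(\lfloor k/4\rfloor)!$, which also $\to 0$ for the same reason (the polynomial-times-log factor is negligible against the factorial). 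Since $C_f \leq C_0$ uniformly, all three left-hand terms tend to $0$ uniformly over $\mathcal F(\mathbf c, \mathbf a)$, so there is $k_1$ depending only on $\mathbf c, \mathbf a$ with \eqref{Key inequality 0} holding for all $k \geq k_1$ and all $f \in \mathcal F_k(\mathbf c, \mathbf a)$.

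It remains to handle the hypothesis that ``the criteria of Proposition \ref{H zeros theorem} are satisfied for $m = k/2 - 1$.'' Since Proposition \ref{H zeros theorem} has not appeared in the excerpt, I would expect its criteria to be conditions purely on $k$ (on the auxiliary polynomial governing trivial/boundary behavior), not on $f$; granting that, they hold for all $k$ beyond some absolute threshold $k_2$. Then $k_0 := \max(12, k_1, k_2)$ works: for every $k \geq k_0$ and every $f \in \mathcal F_k(\mathbf c, \mathbf a)$, Theorem \ref{Main Theorem} applies to $\tilde f$ and gives that $r_{\tilde f}(z)$, hence $r_f(z)$, satisfies the Riemann hypothesis. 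The final sentence follows since $\mathcal F(\mathbf c, \mathbf a) = \bigcup_k \mathcal F_k(\mathbf c, \mathbf a)$ and only the finitely many $k$ with $12 \leq k < k_0$ (note $S_k = 0$ for odd $k$ and small even $k$, so these contribute only finitely many forms once we also restrict to $k$ with $\dim S_k \geq m+n$) can fail the conclusion. The main obstacle I anticipate is purely bookkeeping: making the ``criteria of Proposition \ref{H zeros theorem}'' genuinely independent of $f$ and confirming they hold for all large $k$, and being careful that the uniform bound $C_f \leq C_0$ is the minimal constant in the normalized form $\tilde f$ rather than in $f$ itself — but both reduce to the elementary estimates above rather than any new analytic input.
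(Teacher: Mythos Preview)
Your proposal is correct and follows essentially the same route as the paper: both arguments observe that $N=1$ (since the leading Fourier coefficient is $\sum c_\ell - \sum a_j > 0$), bound the Deligne constant of the normalized form uniformly by $(\sum c_\ell + \sum a_j)/(\sum c_\ell - \sum a_j)$, and then note that the left side of \eqref{Key inequality 0} tends to $0$ while the right side tends to $e^{-2\pi}>0$, with the criteria of Proposition~\ref{H zeros theorem} for $N=1$ reducing to $m\geq 20$. The only cosmetic difference is that the paper packages this verification once and for all as Theorem~\ref{U-bound theorem} (with an explicit function $U(k)\sim c\cdot 2^{k/4}$) and then invokes that, whereas you appeal to Theorem~\ref{Main Theorem} directly.
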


These results suggest interesting probabilistic interpretations. For example, Theorem \ref{Positive sums theorem} suggests that if one selects a cusp form ``at random" from $S_k$, there should be a positive probability that $r_f(z)$ should satisfy the Riemann hypothesis. We will discuss this interpretation in more detail in Section \ref{Probability}. Additionally, Theorem \ref{Constellations} suggests that a generic cusp form whose coefficients are small compared to the weight should satisfy the period polynomial Riemann hypothesis. This idea will also be made concrete in later results, specifically in Theorem \ref{U-bound theorem}.

The remainder of the paper is organized as follows. In Section \ref{Preliminaries}, we discuss preliminary results necessary for the main proofs, including unimodular polynomials, elementary properties of modular $L$-functions, and zeros of certain polynomials associated with exponential functions. In Section \ref{Main Proofs} we prove Theorems \ref{Main Theorem} and \ref{U-bound theorem}, which are the main technical results of the paper, and we derive Theorems \ref{Positive sums theorem} and \ref{Constellations} as corollaries. In Section \ref{Probability}, we give address statistical questions and prove that a ``randomly selected period polynomial" has a positive probability of satisfying the Riemann hypothesis. We end in Section \ref{Further directions} to discuss further possible directions for future research, in particular discussing families of cusp forms for which Theorem \ref{Main Theorem} is not applicable but which still often have unimodular period polynomials.

\section*{Acknowledgements}

The first author received funding from the European Research Council (ERC) under the European Union’s Horizon 2020 research and innovation programme (grant agreement No. 101001179).  The second author would like to thank the Center for Advanced Mathematical Sciences (CAMS) at the American University of Beirut (AUB) for the support and Kathrin Bringmann for the fruitful research visit to Cologne, Germany during which this project arose. The authors thank Kathrin Bringmann for helpful comments that improved the manuscript.

\section{Preliminary results} \label{Preliminaries}

\subsection{Unimodular polynomials}

In this section, we give a brief overview of unimodular polynomials and useful criteria for proving that certain polynomials are unimodular. To study unimodular polynomials, we need the notion of self-reciprocal polynomials. A {\it self-reciprocal polynomial} is any polynomial $P(z) \in \C[z]$ for which there is some complex number $\varepsilon$ of norm 1 such that
\begin{align*}
    P(z) = \varepsilon z^{\deg(P)} P\lp 1/z \rp.
\end{align*}
Cohn \cite{cohn} has shown that a polynomial $P(z) \in \C[z]$ is unimodular if and only if it is self-reciprocal and $P^\prime(z)$ has all its zeros inside the closed unit disk. Using this criterion and employing an argument of Lal\'{i}n and Smyth \cite{lalinsmyth}, El-Guindy and the second author proved the following useful result for producing unimodular polynomials.

\begin{proposition}[{\cite[Theorem 2.2]{elguindyraji}}] \label{Unimodularity Criterion}
	Let $h(z) \in \C[z]$ be a degree $n$ polynomial with all its zeros in the closed unit disk. For any $d \geq n$ and $\lambda \in \C$, consider the polynomial
	\begin{align*}
		P\lp \lambda; z \rp = z^{d-n} h(z) + \lambda z^n \overline{h}\lp 1/z \rp.
	\end{align*}
	Then for any $|\lambda| = 1$ such that $P\lp \lambda; z \rp \not = 0$, we have that $P\lp \lambda; z \rp$ is unimodular.
\end{proposition}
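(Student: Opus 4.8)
The plan is to locate the zeros of $P\lp \lambda; z \rp$ directly and show they all lie on the unit circle, which is precisely the assertion that $P\lp \lambda; z \rp$ is unimodular; this also dovetails with Cohn's criterion, since once all the zeros are known to lie on the circle the Gauss--Lucas theorem places the zeros of $P^\prime\lp \lambda; z \rp$ inside the closed unit disk. Throughout fix $\lambda$ with $|\lambda| = 1$, write $P_\lambda(z) := P\lp \lambda; z \rp$, and set $A(z) := z^{d-n} h(z)$, a polynomial of degree $d$ whose zeros --- the zeros of $h$ together with a zero of order $d-n$ at the origin --- all lie in the closed unit disk. For a polynomial $Q$ let $Q^*(z) := z^{\deg Q}\,\overline{Q}\lp 1/z \rp$ denote its conjugate-reciprocal, where $\overline{Q}$ has the conjugated coefficients of $Q$; this is an involution on polynomials of a fixed degree, so $\lp A^* \rp^* = A$. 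Since $z^n\overline{h}\lp 1/z \rp = A^*(z)$, we have $P_\lambda = A + \lambda A^*$.

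First I would record that $P_\lambda$ is self-reciprocal: using $\lp A^* \rp^* = A$ and $|\lambda| = 1$, a one-line computation gives $P_\lambda^* = A^* + \overline{\lambda} A = \overline{\lambda}\, P_\lambda$, so $P_\lambda$ is self-reciprocal with $\varepsilon = \overline{\lambda}$, and in particular its zero set is invariant under $z \mapsto 1/\overline{z}$. The substance of the argument is then a Blaschke-product estimate. Factoring $h(z) = a_n\prod_{j=1}^n\lp z - \alpha_j \rp$ with each $|\alpha_j| \le 1$, one has $A^*(z) = \overline{a_n}\prod_{j=1}^n\lp 1 - \overline{\alpha_j} z \rp$ and hence
\begin{align*}
    \varphi(z) := -\lambda\,\frac{A^*(z)}{A(z)} = -\lambda\,\frac{\overline{a_n}}{a_n}\cdot\frac{1}{z^{d-n}}\prod_{j=1}^n\frac{1 - \overline{\alpha_j} z}{z - \alpha_j}.
\end{align*}
Each factor $\lp 1 - \overline{\alpha_j} z \rp/\lp z - \alpha_j \rp$, and likewise $1/z^{d-n}$, has modulus $1$ on the unit circle, modulus $\ge 1$ inside it and modulus $\le 1$ outside, the inequalities being strict off the circle unless $|\alpha_j| = 1$ (resp. unless $d = n$). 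Thus $|\varphi| \equiv 1$ on the unit circle, while $|\varphi| \ge 1$ on the open disk (away from the poles of $\varphi$) and $|\varphi| \le 1$ on its exterior, with both of these strict unless $d = n$ and every $\alpha_j$ is unimodular.

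With this in hand the argument finishes quickly. In the exceptional case $d = n$ with all $|\alpha_j| = 1$ one has $A^* = \mu A$ for some unimodular $\mu$, so $P_\lambda = \lp 1 + \lambda\mu \rp h$, which by the hypothesis $P_\lambda \not\equiv 0$ is a nonzero scalar multiple of $h$ and hence unimodular. Otherwise, suppose $P_\lambda(z_0) = 0$ with $|z_0| \ne 1$. If $A(z_0) \ne 0$, then $P_\lambda(z_0) = 0$ is equivalent to $\varphi(z_0) = 1$, which is impossible since $|\varphi(z_0)| \ne 1$. If instead $A(z_0) = 0$, then necessarily $|z_0| < 1$ (the zeros of $A$ lie in the closed disk), and $P_\lambda(z_0) = \lambda A^*(z_0) \ne 0$, because the zeros of $A^*$ all lie in $\{|z| \ge 1\}$ and $A^*(0) = \overline{a_n} \ne 0$. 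Either way we reach a contradiction, so every zero of $P_\lambda$ lies on the unit circle, i.e.\ $P_\lambda$ is unimodular; as noted, Gauss--Lucas together with the self-reciprocity above then also verifies Cohn's criterion directly.

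The step I expect to be the main obstacle is the careful bookkeeping in the Blaschke estimate when $h$ has zeros on the unit circle. Such a zero is shared by $A$ and $A^*$, so $\varphi$ has a removable singularity there and $P_\lambda$ automatically vanishes at that point (which lies on the circle, hence causes no trouble for the conclusion), but one must check this does not spoil the inequalities for $|\varphi|$; and one must cleanly separate out the fully degenerate case $d = n$ with every $\alpha_j$ unimodular, in which $P_\lambda$ is merely a scalar multiple of $h$ and the hypothesis $P_\lambda \not\equiv 0$ is exactly what prevents $P_\lambda$ from vanishing identically.
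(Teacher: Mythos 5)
Your proof is correct. The paper does not reprove this proposition---it delegates to El-Guindy--Raji, whose argument runs through Cohn's criterion (self-reciprocity plus locating the zeros of $P^\prime$ in the closed disk) combined with the Lal\'{i}n--Smyth comparison. Your route is essentially that same Lal\'{i}n--Smyth comparison applied one level up: writing $P_\lambda = A + \lambda A^*$ and showing $|A^*/A|$ is $\geq 1$ inside and $\leq 1$ outside the circle via the Blaschke factorization, you place the zeros of $P_\lambda$ itself on the circle directly, with no need for Cohn's criterion at all. The case analysis is complete: zeros of $A$ off the circle are handled by $A^*(z_0) \neq 0$ for $|z_0| < 1$, zeros of $h$ on the circle only contribute unimodular constant Blaschke factors, and the degenerate case $d = n$ with all $|\alpha_j| = 1$ is exactly where the hypothesis $P_\lambda \not\equiv 0$ is needed. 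What your version buys is a self-contained, elementary proof; what the Cohn route buys is nothing extra here, so the difference is one of packaging rather than substance.
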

In our application, $P\lp \lambda; z \rp$ will be the period polynomial, and we will construct the polynomial $h(z)$ using the functional equation of the completed $L$-function $\Lambda\lp f,s \rp$.

\subsection{Period polynomials and $L$-functions of cusp forms}

We now define the key objects of this paper. We fix a cusp form $f \in S_k$. Let its Fourier expansion be given by
\begin{align*}
    f(z) = \sum_{n \geq N} c(n) q^n,
\end{align*}
where as usual $q = e^{2\pi i z}$ and where $N = N(f) \geq 1$ is the order of vanishing of $f$ at $i\infty$. Note that if $f$ is an eigenform, then a basic result of the theory of eigenforms is that $N = 1$.

Recall that the $L$-function associated to $f$ is given for $\mathrm{Re}\lp s \rp > 1$ by
\begin{align*}
	L\lp f, s \rp := \sum_{n \geq N} \dfrac{c(n)}{n^s}.
\end{align*}
The special values of the $L$-function are then encoded by the coefficients of the period polynomial, which we recall from the introduction is given by
\begin{align*}
	r_f\lp z \rp = - \dfrac{\lp k-2 \rp!}{\lp 2\pi i \rp^{k-1}} \sum_{n=0}^{k-2} \dfrac{\lp 2\pi i z \rp^n}{n!} L\lp  f, k - n - 1 \rp.
\end{align*}
Because our concern is only the norm of the zeros of $r_f(z)$, we normalize for convenience to a polynomial with real coefficients. Letting $w := k-2$ for the remainder of the paper, we consider the modified period polynomial
\begin{align*}
	p_f(z) := - \dfrac{\lp 2 \pi i \rp^{w+1}}{w!} r_f\lp z/i \rp = \sum_{n=0}^w L\lp f, w - n + 1 \rp \dfrac{\lp 2 \pi z \rp^n}{n!}.
\end{align*}
Thus, $r_f(z)$ is unimodular if and only if $p_f(z)$ is unimodular.

We now derive a criterion for proving the unimodularity of $p_f(z)$. The key fact is the well-known functional equation
\begin{align} \label{functional equation}
    \dfrac{\lp 2\pi \rp^{k-s}}{\Gamma\lp k-s \rp} L\lp f,s \rp = i^k \dfrac{\lp 2\pi \rp^s}{\Gamma(s)} L\lp f, k-s \rp.
\end{align}
We use this functional equation to relate the critical values of the $L$-function $L\lp f,s \rp$. More specifically, for any integer $0 \leq n \leq w$ the identity
\begin{align} \label{functional equation 2}
    L\lp f, w-n+1 \rp = i^k \dfrac{\lp 2\pi \rp^{w-n} n!}{\lp 2\pi \rp^n \lp w-n \rp!} L\lp f, n+1 \rp
\end{align}
is easily derived from \eqref{functional equation}. A short calculation using \eqref{functional equation 2} shows that $p_f(z)$ is self-reciprocal. More specifically, we have
\begin{align*}
    p_f(z) = i^k z^w p_f\lp \dfrac 1z \rp = i^k \sum_{n=0}^w L\lp f, w-n+1 \rp \dfrac{\lp 2\pi \rp^n}{n!} z^{w-n}.
\end{align*}
Note that $k$ must be even, as $f$ is a level 1 holomorphic modular form, so $i^k = \pm 1$. This equation motivates the idea of utilizing Proposition \ref{Unimodularity Criterion}. We now set up a notation to make this connection explicit. Letting $m := \frac{w}{2} = \frac{k}{2} - 1$ for the remainder of the paper, we define the polynomial
\begin{align} \label{q-def}
	q_f(z) := \sum_{n=0}^{m-1} L\lp f, w-n+1 \rp \dfrac{\lp 2 \pi \rp^n}{n!} z^{m-n} + \dfrac{1}{2} L\lp f, \dfrac{k}{2} \rp \dfrac{\lp 2\pi \rp^m}{m!}.
\end{align}
Observe that the functional equation of the $L$-function implies the crucial identity
\begin{align} \label{Set-up}
	i^k p_f(z) = z^m q_f(z) + i^k z^m q_f\lp 1/z \rp.
\end{align}
By applying Proposition \ref{Unimodularity Criterion} to \eqref{Set-up}, we obtain the following criteria for unimodularity for $r_f(z)$.

\begin{proposition} \label{q zeros}
    Let $f \in S_k$ be a cusp form. If the zeros of $q_f(z)$ are inside the closed unit disk, then the period polynomial $r_f(z)$ is unimodular.
\end{proposition}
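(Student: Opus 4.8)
The plan is to verify directly that the hypotheses of Proposition~\ref{Unimodularity Criterion} are met by the decomposition \eqref{Set-up}, and then translate the conclusion back through the chain of normalizations $r_f \leftrightarrow p_f$. Concretely, I would take $h(z) := q_f(z)$, which by definition \eqref{q-def} is a polynomial of degree $n := m$ (its leading coefficient is $L(f, w+1) = L(f, k-1)$, which is nonzero since $\mathrm{Re}(k-1) = k-1 > 1$ places $k-1$ in the region of absolute convergence, so no cancellation occurs). Then with $d := w = 2m$ and $\lambda := i^k$ — which has modulus $1$ since $k$ is even, so $i^k = \pm 1$ — the polynomial $P(\lambda; z) = z^{d-n} h(z) + \lambda z^n \overline{h}(1/z)$ of Proposition~\ref{Unimodularity Criterion} becomes $z^m q_f(z) + i^k z^m \overline{q_f}(1/z)$. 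The one subtlety here is the appearance of $\overline{h}$ versus $q_f$ itself: since $f$ is a level~$1$ cusp form of even weight, its $L$-function has real critical values $L(f, s) \in \R$ for integer $s$ in the critical strip (these are the coefficients in \eqref{q-def}, and $p_f$ was constructed precisely to have real coefficients), so $\overline{q_f} = q_f$ and the expression agrees with the right-hand side of \eqref{Set-up}. Thus $P(i^k; z) = i^k p_f(z)$.

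Next I would invoke the hypothesis: the zeros of $q_f(z)$ lie in the closed unit disk, which is exactly the requirement ``$h(z)$ has all its zeros in the closed unit disk'' of Proposition~\ref{Unimodularity Criterion}. Applying that proposition with $|\lambda| = |i^k| = 1$, we conclude that $P(i^k; z) = i^k p_f(z)$ is unimodular provided $P(i^k; z) \not\equiv 0$. But $p_f(z)$ is not the zero polynomial — for instance its top-degree coefficient is $L(f, k-1)/w! \cdot (2\pi)^w \neq 0$ by the convergence argument above, or one can simply note $f \neq 0$ forces some critical value to be nonzero — so this nonvanishing is automatic. Multiplying a unimodular polynomial by the unit constant $i^k = \pm 1$ leaves it unimodular, so $p_f(z)$ is unimodular. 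Finally, by the displayed identity $p_f(z) = -\frac{(2\pi i)^{w+1}}{w!} r_f(z/i)$, the polynomials $p_f$ and $r_f$ differ by a nonzero scalar and the invertible substitution $z \mapsto z/i$, which preserves moduli of roots; the excerpt already records that ``$r_f(z)$ is unimodular if and only if $p_f(z)$ is unimodular.'' Hence $r_f(z)$ is unimodular, i.e.\ satisfies the Riemann hypothesis.

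I do not anticipate a genuine obstacle: the entire argument is bookkeeping assembled from machinery already in place. The only points requiring a word of care are (i) confirming $\overline{q_f} = q_f$ so that $\overline{h}(1/z)$ in Proposition~\ref{Unimodularity Criterion} really is $q_f(1/z)$ as in \eqref{Set-up} — this rests on the reality of the completed $L$-values, which is standard for level~$1$ forms and implicit in the construction of $p_f$ — and (ii) ruling out the degenerate case $P(\lambda; z) \equiv 0$ excluded in Proposition~\ref{Unimodularity Criterion}, which follows from $L(f, k-1) \neq 0$. If one wanted to be maximally self-contained, one could alternatively cite Cohn's criterion directly: $p_f$ is self-reciprocal by \eqref{Set-up} and the computation preceding it, and $q_f$'s root condition feeds into the Lal\'in--Smyth-type argument to control $p_f'$; but routing through Proposition~\ref{Unimodularity Criterion} as stated is cleaner and is exactly what the setup was engineered for.
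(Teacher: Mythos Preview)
Your proposal is correct and follows exactly the route the paper takes: the paper gives no detailed argument for Proposition~\ref{q zeros}, simply recording that it follows by applying Proposition~\ref{Unimodularity Criterion} to the decomposition~\eqref{Set-up}, and you have supplied precisely that bookkeeping (including the point that $\overline{q_f}=q_f$ because the critical $L$-values are real, which the paper leaves implicit). One minor quibble: absolute convergence alone does not guarantee $L(f,k-1)\neq 0$ --- an absolutely convergent series can still sum to zero --- so your justification for $\deg q_f = m$ and $p_f\not\equiv 0$ is not airtight as written, though for the paper's purposes (real linear combinations of level-$1$ eigenforms) nonvanishing at $s=k-1$ is easily salvaged.
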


The criterion of Proposition \ref{q zeros} for unimodularity is precisely what we will use to prove the main results of the paper. The rest of the paper is dedicated to the techniques required to prove that the zeros of $q_f(z)$ are inside the closed unit disk for a wide class of cusp forms $f$. The basic idea is to replace the $L$-functions with approximate numerical values, which requires knowledge about the size of the coefficients of the $L$-function.

\subsection{Estimates for $L$-functions}

By Proposition \ref{q zeros}, the question of unimodularity of period polynomials is reduced to the study of the polynomial $q_f(z)$. Before explaining the rigorous technical details, we give a heuristic which explains the approach. First, it is easy to see that for any constant $A$ and any $f \in S_k$, $r_{Af}(X) = A r_f(z)$. Therefore, we may assume without loss of generality that $f \in S_k$ is normalized, i.e. that $f(z) = q^N + O\lp q^{N+1} \rp$. Because most of the size of an $L$-function comes from its earliest terms, we have the naive approximation $L(f,s) \approx N^{-s}$. Using this approximation, we see that
\begin{align*}
    q_f(z) \approx \sum_{n=0}^m N^{-w+n-1} \dfrac{\lp 2 \pi \rp^n}{n!} z^{m-n} = N^{-w-1} \sum_{n=0}^m \dfrac{\lp 2 \pi N \rp^n}{n!} z^{m-n}.
\end{align*}
The polynomial appearing on the right-hand side is closely related to exponential functions. In particular, for any real $\beta > 0$ consider the standard Taylor series decomposition
\begin{align*}	
	\exp\lp 2\pi \beta z \rp = T_{m,\beta}(z) + R_{m,\beta}(z), \ \ \ T_{m,\beta}(z) := \sum_{n=0}^m \dfrac{\lp 2\pi \beta z \rp^n}{n!}.
\end{align*}
It is then easy to see that for
\begin{align*}
	H_{m,\beta}(z) := z^m T_{m,\beta}\lp 1/z \rp
\end{align*}
the previous heuristic approximations say that
\begin{align*}
	q_{f,a}(z) \approx N^{-(w+1)} H_{m,N}(z).
\end{align*}
Since the zeros of a polynomial are continuous functions of its coefficients, the zeros of $q_f(z)$ should behave like the zeros of $H_{m,N}(z)$. Now, the zeros of $T_{m,N}(z)$ go towards infinity as $m$ increases because the exponential function does not vanish on $\C$, and so the zeros of $H_{m,N}(z)$ must tend to zero and therefore should eventually lie inside the unit disk. The main technical tool we need for making this argument rigorous is Lemma \ref{L-function lemma}, which justifies the approximation $L(f,s) \approx N^{-s}$ in rigorous terms. The comparison of the zeros of $H_{m,N}(z)$ and $q_f(z)$ will then be done rigorously using Rouch\'{e}'s theorem.

We end this section with the required technical estimates for $L$-functions, leaving the study of the zeros of $H_{m,N}(z)$ until the following section. The method we use for proving the lemma is a modification of the key lemma in the work of Conrey, Farmer, and Imamoglu \cite{conreyfarmerimamoglu}.

\begin{lemma} \label{L-function lemma}
	Let $f(z) = q^N + \sum\limits_{n > N} c(n) q^n \in S_k$ be a nonzero normalized cusp form. Let $C_f$ be a constant such that $|c(n)| \leq C_f \sigma_0(n) n^{\frac{k-1}{2}}$ for all $n \geq 1$. Then we have for all $\sigma \geq \frac{3k}{4}$ that
	\begin{align*}
		\left| L\lp f, \sigma \rp - N^{-\sigma} \right| < \dfrac{4 C_f}{\lp N + 1 \rp^{k/4}}.
	\end{align*}
	Furthermore, for any integer $\sigma \geq \frac{k}{2}$ we have
	\begin{align*}
		\left| L\lp f, \sigma \rp \right| < 2 C_f \left[ 2 \sqrt{k} \log\lp 2k \rp - 2\sqrt{N-1} + 1 + 2^{k/2 + 1} e^{-\pi k} \right].
	\end{align*}
\end{lemma}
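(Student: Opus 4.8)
The plan is to estimate $L(f,\sigma)$ directly from its Dirichlet series $L(f,\sigma) = \sum_{n \geq N} c(n) n^{-\sigma}$, splitting off the leading term $N^{-\sigma}$ (which has coefficient $c(N) = 1$ by normalization) and bounding the tail $\sum_{n > N} c(n) n^{-\sigma}$ using the Deligne bound $|c(n)| \leq C_f \sigma_0(n) n^{(k-1)/2}$. For the first estimate, with $\sigma \geq \tfrac{3k}{4}$, the tail is bounded by $C_f \sum_{n > N} \sigma_0(n) n^{(k-1)/2 - \sigma} \leq C_f \sum_{n > N} \sigma_0(n) n^{-k/4 - 1/2}$, since $(k-1)/2 - \sigma \leq (k-1)/2 - 3k/4 = -k/4 - 1/2$. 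The divisor bound $\sigma_0(n) \leq n^{\varepsilon}$ combined with a comparison to $(N+1)^{-k/4}$ times a convergent series (or a cruder bound $\sigma_0(n) \leq 2\sqrt{n}$) should then give the constant $4$ with room to spare; the key point is that $\sum_{n>N}\sigma_0(n)n^{-k/4-1/2}$ decays like a fixed multiple of $(N+1)^{-k/4}$ for $k \geq 12$. I would pull out the factor $(N+1)^{-k/4}$ by writing $n^{-k/4-1/2} = (N+1)^{-k/4}\cdot (n/(N+1))^{-k/4}\cdot n^{-1/2}$ and bounding $\sum_{n>N}\sigma_0(n)(n/(N+1))^{-k/4}n^{-1/2}$ by an absolute constant using $k \geq 12$, e.g. by comparison with an integral.

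For the second estimate, valid for integer $\sigma \geq \tfrac k2$, I would follow the approach of Conrey–Farmer–Imamoglu referenced in the text: use the functional equation \eqref{functional equation} and the rapid decay of $\Lambda(f,s)$ to write $L(f,\sigma)$ via an incomplete-gamma / smoothed-sum representation, balancing the "main range" $N \leq n \lesssim k$ against the exponentially small tail. Concretely, one writes $\Lambda(f,\sigma) = \sum_{n \geq N} c(n) \Gamma(\sigma, 2\pi n) / (2\pi)^{\sigma}$ plus the reflected sum coming from $\Lambda(f,k-\sigma)$, and estimates $\Gamma(\sigma, 2\pi n)$ crudely by $\Gamma(\sigma)$ for $n$ small and by its exponentially decaying asymptotics for $n$ large. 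The term $2\sqrt{k}\log(2k)$ comes from bounding $\sum_{N \leq n \lesssim k} \sigma_0(n) n^{(k-1)/2-\sigma} \leq \sum_{n \leq Ck} \sigma_0(n)/n$ (since $\sigma \geq k/2 > (k-1)/2$) which is $O((\log k)^2)$ — actually of size $\asymp \sqrt{k}\log(2k)$ once the divisor sum $\sum_{n\le x}\sigma_0(n)\sim x\log x$ is summed against $n^{-1/2}$; the $-2\sqrt{N-1}$ correction accounts for the missing initial terms $n < N$, and the $2^{k/2+1}e^{-\pi k}$ term is the exponentially small contribution of the reflected sum / the large-$n$ tail.

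The main obstacle is the second estimate: getting the explicit constants right in the smoothed-sum argument, in particular tracking the exponential tail so that it contributes only the $2^{k/2+1}e^{-\pi k}$ term and verifying the divisor-sum estimate $\sum_{N \leq n \leq x} \sigma_0(n) n^{-1/2} \leq 2\sqrt{x}\log(2x) - 2\sqrt{N-1} + 1$ (or something of this shape) with a fully explicit error, rather than an asymptotic one. This requires an Abel-summation argument against the elementary bound $\sum_{n \leq x}\sigma_0(n) \leq x\log x + x$ (Dirichlet's hyperbola estimate, made explicit) and careful handling of the boundary terms. The first estimate, by contrast, is essentially a geometric-series comparison once the factor $(N+1)^{-k/4}$ is extracted, and should be routine for $k \geq 12$.
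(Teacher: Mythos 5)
Your proposal follows essentially the same route as the paper: the first estimate is proved exactly as you describe (Deligne bound, $\sigma_0(n)\le 2\sqrt n$, and an integral comparison after extracting $(N+1)^{-k/4}$, using $N\le k/12$), and the second via the incomplete-gamma representation with the split at $n\approx k$, the explicit divisor-sum bound $\sum_{n\le k}\sigma_0(n)n^{-1/2}\le 2\sqrt k\log(2k)$ from Conrey--Farmer--Imamoglu, and the $2^{k/2}e^{-\pi k}$ tail from $e^{-x}\le e^{-\pi n}e^{-x/2}$. The one labor-saving device in the paper you might adopt: for integer $\sigma\ge \tfrac k2+1$ the Dirichlet series already converges absolutely with exponent $\le -3/2$, so a trivial bound by $C_f\,\zeta(3/2)^2$ suffices there, and the smoothed-sum argument is only needed at the central point $\sigma=\tfrac k2$.
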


\begin{proof}
	Using the triangle inequality along with the bounds $|c(n)| \leq C_f \sigma_0(n) n^{\frac{k-1}{2}}$, $\sigma \geq \frac{3k}{4}$ and the trivial bound $\sigma_0(n) \leq 2 \sqrt{n}$, we have
	\begin{align*}
		\left| L\lp f, \sigma \rp - N^{-\sigma} \right| = \sum_{n \geq N+1} \dfrac{\left| c(n) \right|}{n^\sigma} &\leq C_f \sum_{n \geq N+1} \dfrac{\sigma_0(n)}{n^{\frac{3k}{4} - \frac{k}{2} + \frac 12}} \leq 2 C_f \sum_{n \geq N+1} \dfrac{1}{n^{k/4}}.
	\end{align*}
	From the valence formula we can see that $N \leq \frac{k}{12}$, and so we have $\frac{N+1}{k-4} \leq \frac 14$ for $k \geq 12$. Using this estimate along with elementary Riemann sums, we have
	\begin{align*}
		\sum_{n \geq N+1} \dfrac{1}{n^{k/4}} < \dfrac{1}{\lp N+1 \rp^{k/4}} + \int_{N+1}^\infty \dfrac{1}{x^{k/4}} dx = \dfrac{1 + \frac{4}{k-4} \lp N+1 \rp}{\lp N+1 \rp^{k/4}} < \dfrac{2}{\lp N+1 \rp^{k/4}}
	\end{align*}
	and therefore
	\begin{align*}
		\left| L\lp f, \sigma \rp - N^{-\sigma} \right| < \dfrac{4 C_f}{\lp N+1 \rp^{k/4}}
	\end{align*}
	This gives the first part of the lemma. To prove the second part, we first suppose that $\sigma \geq \frac{k}{2} + 1$ is an integer. We let $\zeta(s) := \sum_{n=1}^\infty n^{-s}$ be the classical Riemann zeta function. It is known classically using Dirichlet convolutions that $\zeta(s)^2 = \sum_{n=1}^\infty \sigma_0(n) n^{-s}$. Using the bounds assumed in the lemma, we have immediately that
	\begin{align} \label{Est 1}
		\left| L\lp f, \sigma \rp \right| \leq C_f \sum_{n = N}^\infty \dfrac{\sigma_0(n)}{n^{3/2}} < C_f \cdot \zeta\lp \frac 32 \rp^2.
	\end{align}
	We now consider the case $\sigma = \frac{k}{2}$. Using standard techniques of analytic number theory, we have
	\begin{align*}
		\Gamma\lp \dfrac{k}{2} \rp L\lp f, \dfrac{k}{2} \rp = 2 \sum_{n = N}^\infty \dfrac{c(n)}{n^{k/2}} \int_{2\pi n}^\infty e^{-x} x^{k/2} \dfrac{dx}{x}.
	\end{align*}
	Thus, we have
	\begin{align*}
		\left| L\lp f, \frac{k}{2} \rp \right| &\leq \dfrac{2}{\Gamma\lp k/2 \rp} \sum_{n = N}^\infty \dfrac{\left| c(n)\right|}{n^{k/2}} \int_{2\pi n}^\infty e^{-x} x^{k/2} \dfrac{dx}{x} \\ &\leq \dfrac{2 C_f}{\Gamma\lp k/2 \rp} \sum_{n=N}^\infty \dfrac{\sigma_0(n)}{\sqrt{n}} \int_{2\pi n}^\infty e^{-x} x^{k/2} \dfrac{dx}{x}.
	\end{align*}
	Since $N < k$, by completing the integral down to zero we have
	\begin{align*}
		\sum_{n = N}^{k} \dfrac{\sigma_0(n)}{\sqrt{n}} \int_{2\pi n}^\infty e^{-x} x^{k/2} \dfrac{dx}{x} < \Gamma\lp k/2 \rp \sum_{n = N}^{k} \dfrac{\sigma_0(n)}{\sqrt{n}}.
	\end{align*}
	By the argument in \cite{conreyfarmerimamoglu} we have
	\begin{align*}
		\sum_{n = 1}^{k} \dfrac{\sigma_0(n)}{\sqrt{n}} \leq 2 \sqrt{k} \log\lp 2 k \rp,
	\end{align*}
	and by using a simple induction argument along with $\sigma_0(n) \geq 1$ it may be shown that
	\begin{align*}
		\sum_{n = 1}^{N-1} \dfrac{\sigma_0(n)}{\sqrt{n}} > 2\sqrt{N-1} - 1.
	\end{align*}
	Thus, it follows that
	\begin{align*}
		\sum_{n = N}^{k} \dfrac{\sigma_0(n)}{\sqrt{n}} \int_{2\pi n} e^{-x} x^{k/2} \dfrac{dx}{x} < \Gamma\lp k/2 \rp \left[ 2\sqrt{k}\log\lp 2k \rp - 2 \sqrt{N-1} + 1 \right].
	\end{align*}
	We also have
	\begin{align*}
		\sum_{n = k+1}^\infty \dfrac{\sigma_0(n)}{\sqrt{n}} \int_{2\pi n}^\infty e^{-x} x^{k/2} \dfrac{dx}{x} &\leq \sum_{n = k+1}^\infty \dfrac{\sigma_0(n)}{\sqrt{n}} \int_{2\pi n}^\infty e^{-\pi n} e^{-x/2} x^{k/2} \dfrac{dx}{x} \\ &= \sum_{n = k+1}^\infty \dfrac{\sigma_0(n)}{\sqrt{n}} 2^{k/2} e^{-\pi n} \int_{\pi n}^\infty e^{-u} u^{k/2} \dfrac{du}{u} \\ &< 2^{k/2} \Gamma\lp k/2 \rp \sum_{n = k+1}^\infty \dfrac{\sigma_0(n)}{\sqrt{n}} e^{-\pi n} \\ &< 2 \Gamma\lp k/2 \rp \cdot 2^{k/2} e^{-\pi k}.
	\end{align*}
	We therefore have for $\sigma = \frac{k}{2}$ after some simplifying that
	\begin{align} \label{Est 2}
		\left| L\lp f, \sigma\rp \right| < 2 C_f \left[ 2 \sqrt{k} \log\lp 2k \rp - 2\sqrt{N-1} + 1 + 2^{k/2 + 1} e^{-\pi k} \right].
	\end{align}
	Using \eqref{Est 1}, we see that \eqref{Est 2} holds for all $\sigma \geq \frac{k}{2}$ whenever
	\begin{align*}
		\dfrac{1}{4} \zeta\lp 3/2 \rp^2 - \dfrac{1}{2} < \sqrt{k} \log\lp 2k \rp - \sqrt{N-1} + 2^{k/2} e^{-\pi k}.
	\end{align*}
	The left-hand side of this expression is $\approx 1.206$. Note that since $N < k$, it would be enough to show that
	\begin{align*}
		\dfrac{1}{4} \zeta\lp 3/2 \rp^2 - \dfrac{1}{2} < \sqrt{k} \log\lp 2k \rp - \sqrt{k} + 2^{k/2} e^{-\pi k}.
	\end{align*}
	The real valued function $g(x) := \sqrt{x} \log\lp 2x \rp -\sqrt{x} + 2^{x/2} e^{-\pi x}$ has $g(x) > \frac{1}{4} \zeta(3/2)^2 - \frac 12$ for all $x > 3$. In particular, since $f \in S_k$ is a nonzero cusp form of level 1 we have $k \geq 12$, and therefore \eqref{Est 2} applies for all $\sigma \geq \frac{k}{2}$.
\end{proof}

\subsection{Truncated exponential series}

This section gives a brief discussion of these zeros of $H_{m,N}(z)$ as $m$ becomes large. Recall the Taylor decomposition
\begin{align*}	
	\exp\lp 2\pi N z \rp = T_{m,N}(z) + R_{m,N}(z), \ \ \ T_{m,N}(z) := \sum_{n=0}^m \dfrac{\lp 2\pi N z \rp^n}{n!},
\end{align*}
and the related polynomial
\begin{align*}
	H_{m,N}(z) := z^m T_{m,N}\lp 1/z \rp.
\end{align*}
In this section, we study the zeros of $H_{m,N}(z)$ with a view towards showing that generically all zeros of $H_{m,N}(z)$ are inside the closed unit disk.

\begin{proposition} \label{H zeros theorem}
    Suppose $m,N \geq 1$ are integers such that $1 \leq N \leq \frac{1}{4} \log\lp m \rp$ and $m \geq 210$. Then
    \begin{align*}
        \left| T_{m,N}(z) \right| \geq e^{-2\pi N} - \dfrac{\lp 2\pi N \rp^{m+1}}{\lp m+1 \rp!} e^{2\pi N} > 0
    \end{align*}
    for every $|z| \leq 1$. Under the same conditions, $H_{m,N}(z)$ has all its zeros in the closed unit disk. Furthermore, $H_{m,1}(z)$ has all its zeros in the unit disk for $m \geq 20$.
\end{proposition}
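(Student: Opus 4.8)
The plan is to prove that, under the hypotheses $1 \le N \le \frac{1}{4}\log m$ and $m \ge 210$, the polynomial $T_{m,N}(z)$ has no zero in the closed unit disk, via the claimed lower bound; the statement about $H_{m,N}(z)$ is then a short formal consequence, and the sharper assertion for $N=1$ needs only a more careful estimate of the Taylor remainder. For the lower bound I would write $T_{m,N}(z) = \exp\lp 2\pi N z\rp - R_{m,N}(z)$. For $|z| \le 1$ we have $\mathrm{Re}(z) \ge -1$, so $\left| \exp\lp 2\pi N z\rp \right| = \exp\lp 2\pi N\, \mathrm{Re}(z)\rp \ge e^{-2\pi N}$, while for the remainder, using $|z| \le 1$ together with $\binom{m+1+j}{j} \ge 1$,
\begin{align*}
    \left| R_{m,N}(z) \right| \le \sum_{n\ge m+1} \frac{\lp 2\pi N\rp^n}{n!} = \frac{\lp 2\pi N\rp^{m+1}}{\lp m+1\rp!}\sum_{j\ge 0}\frac{\lp 2\pi N\rp^j\,\lp m+1\rp!}{\lp m+1+j\rp!} \le \frac{\lp 2\pi N\rp^{m+1}}{\lp m+1\rp!}\, e^{2\pi N}.
\end{align*}
The triangle inequality then gives exactly the displayed lower bound.

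\textbf{Positivity and the zeros of $H_{m,N}$.} It remains to check $\frac{\lp 2\pi N\rp^{m+1}}{\lp m+1\rp!} < e^{-4\pi N}$. Using $(m+1)! > \lp \frac{m+1}{e}\rp^{m+1}$ I would bound the left side by $\lp \frac{2\pi N e}{m+1}\rp^{m+1}$, then insert the hypothesis $N \le \frac{1}{4}\log m$ to get $2\pi N \le \frac{\pi}{2}\log m$ and $e^{-4\pi N} \ge m^{-\pi}$, and reduce to an inequality in $m$ alone: since the left side of the target inequality is increasing in $N$ while the right side is decreasing, it suffices to take $N$ maximal, after which a routine estimate shows the inequality holds for all $m \ge 210$ with enormous room to spare. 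With $|T_{m,N}(z)| > 0$ on $|z|\le 1$ established, note that
\begin{align*}
    H_{m,N}(z) = z^m T_{m,N}\lp 1/z\rp = \sum_{n=0}^m \frac{\lp 2\pi N\rp^n}{n!}\, z^{m-n}
\end{align*}
is monic of degree $m$ with nonzero constant term $\frac{(2\pi N)^m}{m!}$, so every zero $z_0$ of $H_{m,N}$ is nonzero and satisfies $T_{m,N}\lp 1/z_0\rp = 0$. Were $|z_0| > 1$, then $1/z_0$ would be a zero of $T_{m,N}$ in the open unit disk, contradicting the previous step; hence $|z_0| \le 1$, and in fact $|z_0| < 1$ since $T_{m,N}$ has no zero on $|z| = 1$ either.

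\textbf{The case $N = 1$.} The constant $210$ is forced only by the crude tail bound above, which is wasteful. For $N = 1$ I would instead use, for $|z| \le 1$ and $m \ge 5$ (so that $2\pi < m+2$), the geometric-series refinement
\begin{align*}
    \left| R_{m,1}(z) \right| \le \sum_{n \ge m+1} \frac{\lp 2\pi\rp^n}{n!} \le \frac{\lp 2\pi\rp^{m+1}}{\lp m+1\rp!}\lp 1 - \frac{2\pi}{m+2}\rp^{-1},
\end{align*}
so that $|T_{m,1}(z)| > 0$ on $|z| \le 1$ as soon as $\frac{\lp 2\pi\rp^{m+1}}{\lp m+1\rp!}\lp 1 - \frac{2\pi}{m+2}\rp^{-1} < e^{-2\pi}$. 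The left side is strictly decreasing in $m$ for $m \ge 5$, and a direct check shows the inequality already holds (with a small positive margin) at $m = 20$; hence it holds for all $m \ge 20$, and the argument of the previous paragraph then places all zeros of $H_{m,1}$ inside the open unit disk.

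\textbf{Main obstacle.} Everything above is routine except for the sharpness of the positivity estimates: combining the triangle inequality with Stirling's bound loses a great deal, so pushing the $N = 1$ threshold all the way down to $m = 20$ requires the more delicate control of $R_{m,1}$ near $|z| = 1$ and a genuinely rigorous (not merely graphical) verification of the boundary case $m = 20$. Simultaneously accommodating the two hypotheses $m \ge 210$ and $N \le \frac{1}{4}\log m$ — which must allow $N$ to grow with $m$ while still keeping the remainder $R_{m,N}$ dominated by $e^{-2\pi N}$ — is the only other point where care is genuinely needed.
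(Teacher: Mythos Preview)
Your proof is correct and follows essentially the same route as the paper: the triangle inequality $|T_{m,N}(z)| \ge |e^{2\pi N z}| - |R_{m,N}(z)|$, the remainder bound $|R_{m,N}(z)| \le \frac{(2\pi N)^{m+1}}{(m+1)!}e^{2\pi N}$ (the paper derives this via the Lagrange form of Taylor's remainder rather than your series-tail estimate, but the resulting bound is identical), and then a logarithmic/Stirling reduction to verify positivity at $N = \tfrac14\log m$ for $m \ge 210$. The one substantive difference is that the paper does not prove the $N=1$, $m \ge 20$ assertion at all but simply cites \cite[Theorem~3.1]{elguindyraji}, whereas your geometric-series refinement gives a clean self-contained argument for that boundary case.
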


\begin{remark}
    Our main results only use the case $N = 1$ and $m \geq 20$. However, the more general formulation of the proof is relevant for the discussion of the case $N \geq 2$ in Section \ref{Further directions}.
\end{remark}

\begin{proof}[Proof of Proposition \ref{H zeros theorem}]
    The claim regarding $H_{m,1}(z)$ is proven in \cite[Theorem 3.1]{elguindyraji}, so we proceed to the more general case. Let $z$ be a complex number with $|z| = r$ with $0 \leq r \leq 1$. Let $f(z) = \exp\lp 2\pi N z \rp$. Note that by Taylor's theorem, we have the bound
    \begin{align*}
        \left| R_{m,N}(z) \right| \leq \dfrac{\left| f^{(m+1)}(z_0) \right|}{\lp m+1 \rp!} |z|^{m+1}
    \end{align*}
    for some $0 \leq z_0 \leq r$. Since $f^{(m+1)}(z) = \lp 2\pi N \rp^{m+1} e^{2\pi N z}$, we therefore have
    \begin{align*}
        \left| R_{m,N}(z) \right| \leq \dfrac{\lp 2\pi N \rp^{m+1}}{\lp m+1 \rp!} r^{m+1} e^{2\pi r N} \leq \dfrac{\lp 2\pi/N \rp^{m+1}}{\lp m+1 \rp!} e^{2\pi/N}
    \end{align*}
    since the upper bound is clearly an increasing function of $r$ for $N > 0$. Thus, we have by the triangle inequality, the fact that $0 \leq r \leq 1$, and the definition of $T_{m,N}(z)$ that for all $|z| \leq 1$ we have
    \begin{align} \label{T/H lower bound}
        \left| T_{m,N}(z) \right| \geq \left| e^{2\pi N z} \right| - \left| R_{m,N}(z) \right| \geq e^{-2\pi N} - \dfrac{\lp 2\pi N \rp^{m+1}}{\lp m+1 \rp!} e^{2\pi N}.
    \end{align}
    Now, if we define the function
    \begin{align*}
        F_\alpha(m) := e^{-\alpha} - \dfrac{\alpha^{m+1}}{\lp m+1 \rp!} e^\alpha,
    \end{align*}
    it will be enough to show that under the hypotheses of the theorem, we have $F_{2\pi N}(m) > 0$. Now, with some elementary manipulation we for that any $\alpha, m > 0$ that $F_\alpha(m) > 0$ if and only if
    \begin{align*}
        2\alpha + \lp 2m+2 \rp \log(\alpha) < \log\lp m+1 \rp!.
    \end{align*}
    Now, for any $n \geq 1$ it is well known that
    \begin{align*}
        \log\lp n! \rp \geq n\log(n) - n + 1,
    \end{align*}
    and so to prove $F_\alpha(m) > 0$ it would be enough to show that
    \begin{align} \label{eq H 1}
         2\alpha + \lp 2m+2 \rp \log(\alpha) < \lp m+1 \rp \log\lp m+1 \rp - m.
    \end{align}
    Because $\alpha$ and $\log\lp \alpha \rp$ are both increasing functions of $\alpha$, it follows that if \eqref{eq H 1} holds for $\alpha = \alpha_0$ and $m \geq m_0$, then it also holds for $\alpha \leq \alpha_0$ and $m \geq m_0$. Setting $\alpha = 2\pi N$, \eqref{eq H 1} becomes
    \begin{align} \label{eq H 2}
        4\pi N + \lp 2m+2 \rp \log\lp 2\pi N \rp < \lp m+1 \rp \log\lp m+1 \rp - m,
    \end{align}
    and if we establish \eqref{eq H 2} for some $N = N_0 \geq 1$ and $m \geq m_0$, then it is also established for $1 \leq N \leq N_0$ and $m \geq m_0$. Now, a straightforward computation verifies \eqref{eq H 2} for $N = \frac{1}{4} \log\lp m \rp$ for all $m \geq 210$, and therefore for all $1 \leq N \leq \frac{1}{4} \log\lp m \rp$ and all $m \geq 210$, \eqref{eq H 2} holds, and by \eqref{T/H lower bound} we obtain
    \begin{align*}
        \left| T_{m,N}(z) \right| \geq \left| e^{2\pi N z} \right| - \left| R_{m,N}(z) \right| \geq e^{-2\pi N} - \dfrac{\lp 2\pi N \rp^{m+1}}{\lp m+1 \rp!} e^{2\pi N} > 0.
    \end{align*}
    This proves the first part of the proposition. The statement about the zeros of $H_{m,N}(z)$ follows from the first part along with the identity $H_{m,N}(z) = z^m T_{m,N}(1/z)$.
\end{proof}

\begin{remark}
    One can see easily from the definition of $H_{m,N}(z)$ that if the hypotheses of Proposition \ref{H zeros theorem} hold and if $|z| = 1$, then
    \begin{align} \label{H lower bound}
        \left| H_{m,N}(z) \right| \geq e^{-2\pi N} - \dfrac{\lp 2\pi N \rp^{m+1}}{\lp m+1 \rp!} e^{2\pi N} > 0.
    \end{align}
\end{remark}

\section{Proofs of main results} \label{Main Proofs}

\subsection{Proof of Theorem \ref{Main Theorem}}

Let $f \in S_k$ be a nonzero cusp form with an order of vanishing $N \geq 1$ at infinity. Throughout the proof we use the notation $m = \frac{w}{2} = \frac{k}{2} - 1$. We begin with calculations that compare the polynomials $q_f(z)$ and $N^{-(w+1)} H_{m,N}(z)$. By Lemma \ref{L-function lemma}, we have for all $|z| = 1$ that
\begin{align*}
	\big| q_f(z) &- N^{-(w+1)} H_{m,N}(z) \big| \\ &\leq \sum_{n=0}^{m-1} \left| L\lp f, w-n+1 \rp - N^{-(w-n+1)} \right| \dfrac{\lp 2\pi \rp^n}{n!} + \lp \left|L\lp f, w-m+1 \rp\right| + N^{-(w-m+1)} \rp \dfrac{1}{m!} \\ &\leq \sum_{n=0}^{\lfloor \frac{k}{4} \rfloor - 1} \left| L\lp f, w-n+1 \rp - N^{-(w-n+1)} \right| \dfrac{\lp 2\pi \rp^n}{n!} \\ & \hspace{.2in} + \sum_{n = \lfloor \frac{k}{4} \rfloor}^m \lp \left| L\lp f, w-n+1 \rp \right| + N^{-(w-n+1)} \rp \dfrac{\lp 2\pi \rp^n}{n!} \\ &< \sum_{n = 0}^{\lfloor \frac{k}{4} \rfloor - 1} E_1\lp f \rp \dfrac{\lp 2\pi \rp^n}{n!} + \sum_{n = \lfloor \frac{k}{4} \rfloor}^m \lp E_2\lp f \rp + N^{-(w-n+1)} \rp \dfrac{\lp 2\pi \rp^n}{n!},
\end{align*}
where we define
\begin{align*}
    E_1\lp f \rp := \dfrac{4 C_f}{\lp N + 1 \rp^{k/4}}
\end{align*}
and
\begin{align*}
    E_2\lp f \rp := 2 C_f \left[ 2 \sqrt{k} \log\lp 2k \rp - 2\sqrt{N-1} + 1 + 2^{k/2 + 1} e^{-\pi k} \right].
\end{align*}
The first summand in the above upper bound may be bounded by
\begin{align*}
    \sum_{n = 0}^{\lfloor \frac{k}{4} \rfloor - 1} E_1\lp f \rp \dfrac{\lp 2\pi \rp^n}{n!} = \dfrac{4C_f}{\lp N+1 \rp^{k/4}} \sum_{n=0}^{\lfloor \frac{k}{4} \rfloor - 1} \dfrac{\lp 2\pi \rp^n}{n!} < \dfrac{4 e^{2\pi} C_f}{\lp N+1 \rp^{k/4}} =: C_f \alpha_{k,N}.
\end{align*}
The second summand can be broken down into two pieces, which we bound separately using Taylor's theorem. Then we have using $N \geq 1$, $k \geq 12$, and $\sqrt{k} \log\lp 2k \rp < k$ for $k \geq 12$ that
\begin{align*} 
      \sum_{n = \lfloor \frac{k}{4} \rfloor}^m E_2\lp f \rp \dfrac{\lp 2\pi \rp^n}{n!} &< 2 C_f \left[ 2 \sqrt{k} \log\lp 2k \rp - 2\sqrt{N-1} + 1 + 2^{k/2 + 1} e^{-\pi k} \right] \sum_{n = \lfloor \frac{k}{4} \rfloor}^\infty \dfrac{\lp 2\pi \rp^n}{n!} \\ &< \dfrac{4 e^{2\pi} C_f \lp 2\pi \rp^{\lfloor \frac{k}{4} \rfloor} \left[ \sqrt{k} \log\lp 2k \rp + 1 \right]}{\lp \lfloor \frac{k}{4} \rfloor \rp!} =: C_f \beta_k
\end{align*}
by applying Taylor's theorem, and likewise
\begin{align*}
     \sum_{n = \lfloor \frac{k}{4} \rfloor}^m N^{-(w-n+1)} \dfrac{\lp 2\pi \rp^n}{n!} < N^{-(w+1)} \sum_{n = \lfloor \frac{k}{4} \rfloor}^\infty \dfrac{\lp 2\pi N \rp^n}{n!} < \dfrac{e^{2\pi N} \lp 2\pi N \rp^{\lfloor \frac k4 \rfloor}}{N^{w+1} \lp \lfloor \frac k4 \rfloor \rp!} =: \gamma_{k,N}.
\end{align*}
Therefore, we have
\begin{align} \label{Eq 1}
    \big| q_f(z) &- N^{-(w+1)} H_{m,N}(z) \big| < C_f \alpha_{k,N} + C_f \beta_k + \gamma_{k,N}.
\end{align}
Now, recalling \eqref{H lower bound}, we have if $m \geq 210$ and $1 \leq N \leq \frac 14 \log\lp m \rp$ that
\begin{align*}
    \left| N^{-(w+1)} H_{m,N}(z) \right| \geq \delta_{k,N} > 0,
\end{align*}
where
\begin{align*}
     \delta_{k,N} := N^{-(w+1)} \lp e^{-2\pi N} - \dfrac{\lp 2\pi N \rp^{k/2}}{\lp k/2 \rp!} e^{2\pi N} \rp.
\end{align*}
Thus, if we assume that $m \geq 210$, $1 \leq N \leq \frac{1}{4} \log\lp m \rp$, and the inequality
\begin{align} \label{Key inequality}
    C_f \alpha_{k,N} + C_f \beta_k + \gamma_{k,N} < \delta_{k,N},
\end{align}
can combine Proposition \ref{H zeros theorem} with equations \eqref{Eq 1} and \eqref{Key inequality} to obtain
\begin{align*}
    \left| q_f(z) - N^{-(w+1)} H_{m,N}(z) \right| < \left| N^{-(w+1)} H_{m,N}(z) \right|.
\end{align*}
Thus, if we assume \eqref{Key inequality}, the hypotheses of Rouch\'{e}'s theorem are satisfied and we see that $q_f(z)$ and $H_{m,N}(z)$ have the same number of zeros in the unit disk. We can then invoke Proposition \ref{H zeros theorem} to determine how many zeros $q_f(z)$ has inside the unit disk. In particular, if $m,N$ are such that $H_{m,N}(z)$ has all its zeros inside the unit disk, then since $q_f(z)$ and $H_{m,N}(z)$ have the same degree, $q_f(z)$ also has all its zeros in the unit disk, and then by Proposition \ref{q zeros} we have that $r_f(z)$ is unimodular. We have thus shown that the truth of the inequality \eqref{Key inequality} is sufficient to prove that $r_f(z)$ is unimodular, which completes the proof of Theorem \ref{Main Theorem}.

\subsection{Proof of Theorem \ref{U-bound theorem}}

We now prove a very general corollary of Theorem \ref{Main Theorem} which is useful for studying certain families of cusp forms. Philosophically, this theorem tells us that if $f \in S_k$ is a linear combination of eigenforms $f = c_1 f_1 + \dots c_r f_r$, then $r_f(z)$ will satisfy the Riemann hypothesis provided that the $c_j$'s do not cancel each other too much.

\begin{theorem} \label{U-bound theorem}
 Let $k \geq 180$ be an even integer and let $f = c_1 f_1 + \dots + c_r f_r$ be a weight $k$ cusp form, where $f_j$ denote the eigenforms of $S_k$ and $c_j \in \C$, not all zero. Define the function $U(k)$ by
 \begin{align*}
    U(k) := \dfrac{0.001865}{4 e^{2\pi}} 2^{k/4},
\end{align*}
and suppose the coefficients $c_i$ satisfy the inequality
\begin{align} \label{ineq}
    \sum_{j=1}^r \left| c_j \right| \leq U(k) \cdot \left| \sum_{j=1}^r c_j \right|.
\end{align}
Then the period polynomial $r_f(z)$ satisfies the Riemann hypothesis. In particular, if $c_j \in \R$ for all $j$ and we define $C^\pm := \pm \sum_{\pm c_j > 0} c_j$, then if either of the inequalities
\begin{align*}
    C^\pm \leq \dfrac{U(k) - 1}{U(k) + 1} C^\mp
\end{align*}
hold, then $r_f(z)$ satisfies the Riemann hypothesis.
\end{theorem}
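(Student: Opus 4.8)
The plan is to deduce Theorem~\ref{U-bound theorem} from Theorem~\ref{Main Theorem} applied with $N=1$, the one new ingredient being a bound for the constant $C_f$ of a linear combination of eigenforms. Since the $c_j$ are not all zero, \eqref{ineq} forces $\sum_j c_j \neq 0$ (otherwise its right-hand side vanishes and all $c_j=0$), and because $r_{Af}(z)=A r_f(z)$ it is equivalent to prove that $r_{\tilde f}(z)$ is unimodular, where $\tilde f := \big(\sum_j c_j\big)^{-1} f = \sum_j \tilde c_j f_j$ satisfies $\sum_j \tilde c_j = 1$. Writing each eigenform as $f_j(z)=q+\sum_{n>1} a_j(n) q^n$, Deligne's bound gives $|a_j(n)| \leq \sigma_0(n)\, n^{(k-1)/2}$, so $\tilde f(z) = q + \sum_{n>1}\tilde c(n) q^n$ with $|\tilde c(n)| \leq \big(\sum_j |\tilde c_j|\big)\,\sigma_0(n)\, n^{(k-1)/2} = \frac{\sum_j |c_j|}{|\sum_j c_j|}\,\sigma_0(n)\,n^{(k-1)/2}$. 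Hence $\tilde f$ is a normalized cusp form with order of vanishing $N=1$ at $i\infty$, and we may take $C_{\tilde f} \leq \frac{\sum_j |c_j|}{|\sum_j c_j|} \leq U(k)$ by \eqref{ineq}.

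Next I would apply Theorem~\ref{Main Theorem} to $\tilde f$ with $N=1$ and $C_f = C_{\tilde f}$. The criteria of Proposition~\ref{H zeros theorem} relevant for $N=1$ hold since $m=\frac{k}{2}-1\geq 89$: the polynomial $H_{m,1}(z)$ has all its zeros in the closed unit disk, and the lower bound \eqref{H lower bound} is valid with $\delta_{k,1}=e^{-2\pi}-\frac{(2\pi)^{k/2}}{(k/2)!}e^{2\pi}>0$. It then remains to verify \eqref{Key inequality 0} with $N=1$. Its left-hand side is increasing in $C_f$, so it suffices to check it with $C_f=U(k)$; with $U(k)=\frac{0.001865}{4e^{2\pi}}\,2^{k/4}$ the first term of \eqref{Key inequality 0} equals exactly $0.001865$, which is strictly less than $e^{-2\pi}\approx 0.0018674$. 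After moving that term across, \eqref{Key inequality 0} becomes the assertion that
\begin{align*}
    0.001865\cdot\frac{2^{k/4}(2\pi)^{\lfloor k/4\rfloor}\big[\sqrt{k}\log(2k)+1\big]}{(\lfloor k/4\rfloor)!} + \frac{e^{2\pi}(2\pi)^{\lfloor k/4\rfloor}}{(\lfloor k/4\rfloor)!} + \frac{(2\pi)^{k/2}}{(k/2)!}\,e^{2\pi} < e^{-2\pi}-0.001865
\end{align*}
for all even $k\geq 180$.

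I expect this last estimate to be the main (if routine) obstacle, because the available slack $e^{-2\pi}-0.001865 \approx 2.4\times 10^{-6}$ is small. The mechanism behind it is robust, though: $2^{k/4}(2\pi)^{\lfloor k/4\rfloor}/(\lfloor k/4\rfloor)! \approx (4\pi)^{k/4}/(k/4)!$ decays superexponentially in $k$, so the entire left-hand side --- even after multiplication by the subexponential factor $\sqrt{k}\log(2k)$ --- tends to $0$, while the right-hand side is a fixed positive constant. Concretely, I would check the displayed inequality numerically at $k=180$ and then show its left-hand side is decreasing for $k\geq 180$ (for instance by bounding the ratio of consecutive values), which settles all even $k\geq 180$. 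Theorem~\ref{Main Theorem} then yields that $r_{\tilde f}(z)$, and therefore $r_f(z)$, is unimodular.

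Finally, the ``in particular'' statement is a direct reformulation of \eqref{ineq}. When all $c_j$ are real, $\sum_j c_j = C^+ - C^-$ and $\sum_j |c_j| = C^+ + C^-$, so \eqref{ineq} reads $C^+ + C^- \leq U(k)\,|C^+ - C^-|$; since $U(k)>1$ for $k\geq 180$, this is equivalent to $C^- \leq \frac{U(k)-1}{U(k)+1}C^+$ when $C^+ \geq C^-$ and to $C^+ \leq \frac{U(k)-1}{U(k)+1}C^-$ when $C^- \geq C^+$, so \eqref{ineq} holds exactly when one of the two displayed inequalities does.
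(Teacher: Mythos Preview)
Your proposal is correct and follows essentially the same route as the paper: normalize so that $\sum_j c_j=1$, bound $C_{\tilde f}\le U(k)$ via Deligne, and then verify \eqref{Key inequality 0} with $N=1$ numerically for $k\ge 180$. The only cosmetic difference is in the bookkeeping of the numerical step---the paper tracks the quantities $\alpha_{k,1},\beta_k,\gamma_{k,1},\delta_{k,1}$ separately and bounds $\beta_k$ via a two-step recursion in $k$ (splitting $k\bmod 4$, which you will also need since the left-hand side is not literally monotone step by step), whereas you substitute $C_f=U(k)$ directly; both arrive at the same inequality.
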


\begin{proof}
    Let $f = c_1 f_1 + \dots + c_r f_r$ as stated in the theorem. We can assume without loss of generality that $C := \sum_{j=1}^r c_j \not = 0$, since otherwise \eqref{ineq} fails. Then $f = Cq + O\lp q^2 \rp$, we can without loss of generality normalize to $f^* = \frac{1}{C} f = q + O\lp q^2 \rp$. Now, if $f = \sum_{n \geq 1} a(n) q^n$, Deligne's theorem implies that $\left| a(n) \right| \leq C^* \sigma_0(n) n^{\frac{k-1}{2}}$, where $C^* := \sum_{j=1}^r \left| c_j \right|$. In the notation above, we have $C_{f^*} \leq \frac{C^*}{C}$. Observe that the main condition required on the $c_j$ in the statement of the theorem can be rephrased as $\frac{C^*}{C} \leq U(k)$. Thus, it will suffice to prove that if $g \in S_k$ is a normalized cusp form, the Riemann hypothesis for $r_g(z)$ holds whenever $C_g \leq U(k)$. 
    
    We prove this claim by carefully computing bounds on each term in \eqref{N=1 eqn v1}. It is straightforward to compute the bounds $\delta_{k,1} > 0.001867$ and $\gamma_{k,1} < 10^{-10}$ for all $k \geq 150$ from these definitions, and therefore we may prove \eqref{N=1 eqn v1} by proving the simpler inequality
    \begin{align} \label{N=1 eqn v2}
        C_g \alpha_{k,1} + C_g \beta_k < 0.001866.
    \end{align}
    Now, using the inequality $\frac{\sqrt{k+2} \log\lp 2k+4 \rp + 1}{\sqrt{k} \log\lp 2k \rp + 1} < 1.01$, valid for $k \geq 150$, we may derive in a straightforward manner the inequalities
    \begin{align*}
        \beta_{k+2} < \begin{cases} 1.01 \beta_k & \text{if } k \equiv 0 \pmod{4}, \\ \dfrac{1.01 \times 2\pi}{\frac{k}{4} + 1} \beta_k & \text{if } k \equiv 2 \pmod{4}. \end{cases}
    \end{align*}
    Noting that $150 \equiv 2 \pmod{4}$, $\frac{k}{4} - 1 > 36$ for $k \geq 150$, and $\beta_{150} < 10^{-8}$, we see that for every integer $\ell \geq 0$ and $k = 150 + 2\ell$, we have
    \begin{align*}
        \beta_k < \lp 1.01 \rp^\ell \lp \dfrac{2\pi}{36} \rp^{\lceil \frac{\ell}{2} \rceil} \beta_{150} < \lp \dfrac{3}{7} \rp^\ell 10^{-8}.
    \end{align*}
    This condition implies that $\beta_k < \frac{1}{C_g} 10^{-6}$ whenever $C_g < 100 \lp \frac 73 \rp^{\frac{k-150}{2}} =: U^*(k)$, and therefore if we assume this condition on $C_g$ then the inequality \eqref{N=1 eqn v2} would follow from
    \begin{align*}
        \alpha_{k,1} < \dfrac{0.001865}{C_g},
    \end{align*}
    which may be quickly simplified to
    \begin{align} \label{N=1 eqn v3}
        C_g < \dfrac{0.001865}{4 e^{2\pi}} 2^{k/4} = U(k).
    \end{align}
    We therefore conclude from Theorem \ref{Main Theorem} that if $C_g < \min\lp U^*(k), U(k) \rp$, the period polynomial of $g$ is unimodular. It is easy to see that $U(k) < U^*(k)$ for $k \geq 180$, so in fact we need only show that $C_g < U(k)$ for $k \geq 180$, which completes the proof.
\end{proof}

\subsection{Proof of Theorems \ref{Positive sums theorem} and \ref{Constellations}}

We first prove Theorem \ref{Positive sums theorem}. Note that because $c_j \geq 0$ for all $j$, we can without loss of generality specialize to the case $N=1$ in Theorem \ref{Main Theorem}. Thus, by Proposition \ref{H zeros theorem}, the assumptions $k \geq 42$ and
\begin{align} \label{N=1 eqn v1}
    C_f \alpha_{k,1} + C_f \beta_k + \gamma_{k,1} < \delta_{k,1}
\end{align}
are sufficient to prove the unimodularity of the period polynomial of $f$. We first prove \eqref{N=1 eqn v1}. As in previous proofs, note that if $C = c_1 + \dots + c_r$ and $f^*(z) := \frac{1}{C} f(z) = q + O\lp q^2 \rp$, we need only show that $r_{f^*}(z)$ satisfies the Riemann hypothesis. Now, since $f$ is a linear combination of $C$ eigenforms, the Deligne bound tells us that $C_f \leq C$, and so $C_{f^*} \leq 1$. Therefore, to prove that $f$ has a unimodular period polynomial we need only prove $\alpha_{k,1} + \beta_k + \gamma_{k,1} < \delta_{k,1}$. This is clearly true asymptotically, since  $\alpha_{k,1}, \beta_{k,1}, \gamma_{k,1} \to 0$ as $k \to \infty$ and $\delta_{k,1} > 0$ and is an increasing function of $k$; indeed, this inequality holds for $k \geq 120$, and so Theorem \ref{Positive sums theorem} follows.

We now prove Theorem \ref{Constellations}. For this result, it is more convenient to use Theorem \ref{U-bound theorem}. If $f \in \mathcal F\lp \mathbf{c}, \mathbf{a} \rp$, i.e. $f = c_1 f_{i_1} + \dots + c_m f_{i_m} - a_1 f_{j_1} - \dots - a_n f_{j_n}$, we have in the notation of Theorem \ref{U-bound theorem} that $C^+ = \sum_{i=1}^m c_i$, $C^- = \sum_{j=1}^n a_j$, and $C^+ > C^- \geq 0$. By Theorem \ref{U-bound theorem} it would suffice to prove that
\begin{align*}
    C^- \leq \dfrac{U(k) - 1}{U(k)+1} C^+
\end{align*}
holds for $k \gg 0$. This is clear from the fact that $C^+ > C^-$ and $\frac{U(k) - 1}{U(k)+1} \to 1$ as $k \to \infty$.

\section{Probabilistic interpretations} \label{Probability}

In this section, we discuss some probabilistic considerations coming from Theorem \ref{Positive sums theorem}. Before stating these results, we need to set up notation. For integers $X \geq 1$ and $k \geq 12$ even, define the sets
\begin{align*}
        \mathcal S_{k,X} &:= \{ f = c_1 f_1 + \dots + c_r f_r : -X \leq c_j \leq X, c_j \in \Z \}, \\
        \mathcal S_{k,X}^{\mathrm{RH}} &:= \{ f \in \mathcal S_{k,X} : r_f\lp z \rp \text{ satisfies the Riemann hypothesis} \}.
\end{align*}
We also consider the function
\begin{align*}
    \mathcal P_k\lp X \rp := \dfrac{\left| \mathcal S_{k,X}^{\mathrm{RH}} \right|}{\left| \mathcal S_{k,X} \right|},
\end{align*}
which can be interpreted as the probability that the period polynomial of a randomly selected cusp form $f \in \mathcal S_{k,X}$ satisfies the Riemann hypothesis. The following corollary follows immediately from Theorem \ref{Positive sums theorem} by considering the probability that either $c_j \leq 0$ or $c_j \geq 0$ for all $j$.

\begin{corollary} \label{Probability corollary}
    Let $X \geq 1$ be an integer and let $k \geq 180$. If $r = \dim\lp S_k \rp$, then
    \begin{align*}
        \mathcal P_k\lp X \rp \geq \dfrac{1}{2^{r-1}}.
    \end{align*}
\end{corollary}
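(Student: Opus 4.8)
The plan is to lower-bound the number of forms in $\mathcal S_{k,X}$ whose period polynomial satisfies the Riemann hypothesis by counting those $f = c_1 f_1 + \dots + c_r f_r$ for which all the $c_j$ have the same sign (allowing zeros), and then invoking Theorem \ref{Positive sums theorem}. First I would recall that by Theorem \ref{Positive sums theorem} (and the subsequent remark, which notes the result is unchanged under $c_j \mapsto -c_j$), for $k \geq 120$ — in particular for $k \geq 180$ — any nonzero $f \in S_k$ of the form $c_1 f_1 + \dots + c_r f_r$ with all $c_j \geq 0$, or with all $c_j \leq 0$, has $r_f(z)$ satisfying the Riemann hypothesis. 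Hence every nonzero $f \in \mathcal S_{k,X}$ with integer coefficients all in $[0,X]$ or all in $[-X,0]$ lies in $\mathcal S_{k,X}^{\mathrm{RH}}$.

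Next I would count these. The number of integer vectors $(c_1,\dots,c_r)$ with each $c_j \in \{0,1,\dots,X\}$ is $(X+1)^r$; likewise $(X+1)^r$ vectors with each $c_j \in \{-X,\dots,0\}$; these two sets overlap exactly in the single zero vector. Discarding the zero vector (which does not give a cusp form, but in any case contributes a negligible count), we obtain at least $2(X+1)^r - 1 - 1 = 2(X+1)^r - 2$ nonzero forms in $\mathcal S_{k,X}^{\mathrm{RH}}$; a cleaner bound that suffices is simply $\left| \mathcal S_{k,X}^{\mathrm{RH}} \right| \geq (X+1)^r$, since the ``all $c_j \geq 0$'' family alone (including the zero vector, or replacing it by any other same-sign vector) has at least $(X+1)^r$ members and all its nonzero members qualify. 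Meanwhile $\left| \mathcal S_{k,X} \right| = (2X+1)^r$. Therefore
\begin{align*}
    \mathcal P_k(X) = \dfrac{\left| \mathcal S_{k,X}^{\mathrm{RH}} \right|}{\left| \mathcal S_{k,X} \right|} \geq \dfrac{(X+1)^r}{(2X+1)^r} \geq \dfrac{1}{2^r},
\end{align*}
using $2X+1 \leq 2(X+1)$. To sharpen this to $1/2^{r-1}$ I would instead use the two same-sign families together: they contribute at least $2(X+1)^r - O(1)$ elements, and since $(2X+1)^r \leq (2X+2)^r = 2^r(X+1)^r$, this gives $\mathcal P_k(X) \geq \frac{2(X+1)^r - O(1)}{2^r (X+1)^r}$, which exceeds $\frac{1}{2^{r-1}}$ once the lower-order terms are absorbed; handling the case $X=1$ (and small $r$) directly by the exact count $2(X+1)^r - 2 \geq (X+1)^r$ when $(X+1)^r \geq 2$ closes the remaining gap.

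The main obstacle is genuinely minor here: it is just bookkeeping to ensure the inclusion–exclusion at the zero vector and the inequality $(2X+1)^r \leq 2^r(X+1)^r$ are applied so as to land exactly on the stated constant $2^{1-r}$ rather than the slightly weaker $2^{-r}$, and to confirm that excluding $f = 0$ (which is not a cusp form in the usual sense, or at least has a degenerate period polynomial) does not spoil the bound — which it does not, since for $X \geq 1$ and $r \geq 1$ we have $(X+1)^r \geq 2$, so $2(X+1)^r - 2 \geq (X+1)^r \geq (2X+1)^r / 2^{r-1}$. No hard analysis is needed; all the analytic content is already contained in Theorem \ref{Positive sums theorem}.
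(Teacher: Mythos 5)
Your overall strategy is exactly the paper's: the paper derives the corollary from Theorem \ref{Positive sums theorem} ``by considering the probability that either $c_j \leq 0$ or $c_j \geq 0$ for all $j$,'' which is precisely your count of the two same-sign families. So the idea is right. However, the concluding arithmetic in your proposal contains a step that fails. Your fallback chain is $2(X+1)^r - 2 \geq (X+1)^r \geq (2X+1)^r/2^{r-1}$, and the second inequality is false in general: it is equivalent to $\left(\tfrac{2X+2}{2X+1}\right)^r \geq 2$, i.e.\ to $r \log\left(1 + \tfrac{1}{2X+1}\right) \geq \log 2$, which fails once $X$ is large compared to $r$ (e.g.\ $X = 20$, $r = 15$, which occurs for $k = 180$). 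Likewise, your intermediate claim that $\tfrac{2(X+1)^r - O(1)}{2^r(X+1)^r}$ ``exceeds $\tfrac{1}{2^{r-1}}$ once the lower-order terms are absorbed'' goes the wrong way: that quantity equals $\tfrac{1}{2^{r-1}}\left(1 - (X+1)^{-r}\right)$, which is \emph{strictly less} than $\tfrac{1}{2^{r-1}}$, so the crude bound $(2X+1)^r \leq 2^r (X+1)^r$ is too lossy to close the argument.

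The fix is to use the exact identity $(2X+1)^r = 2^r\left(X+\tfrac12\right)^r$ rather than the inequality $(2X+1)^r \leq 2^r(X+1)^r$. The desired bound
\begin{align*}
    \dfrac{2(X+1)^r - 2}{(2X+1)^r} \;\geq\; \dfrac{1}{2^{r-1}}
\end{align*}
is then equivalent to $(X+1)^r - \left(X+\tfrac12\right)^r \geq 1$, and by the mean value theorem the left-hand side equals $\tfrac{r}{2}\xi^{r-1}$ for some $\xi \geq X + \tfrac12 \geq \tfrac32$, which is at least $\left(\tfrac32\right)^{r-1} > 1$ for all $r \geq 2$ (and $k \geq 180$ forces $r \geq 14$). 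With that substitution your count $2(X+1)^r - 2$ of nonzero same-sign forms, divided by $|\mathcal S_{k,X}| = (2X+1)^r$, gives the stated constant $2^{1-r}$ uniformly in $X$. Everything else in your write-up (the appeal to Theorem \ref{Positive sums theorem} and its remark, the inclusion--exclusion at the zero vector) is fine.
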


\begin{remark}
    A similar statement could be proved allowing $X$ and $c_j$ to take on real values in terms of lower densities. For simplicity, we do not discuss this in detail.
\end{remark}

Conceptually, we interpret Corollary \ref{Probability corollary} as a statement that for large weights $k$, a randomly selected cusp form has a positive probability of satisfying the Riemann hypothesis. This raises many interesting questions about the nature of the values $\mathcal P_k\lp X \rp$, particularly about limiting values. Using results in this paper, we are able to prove the following limiting result as $k \to \infty$.

\begin{corollary} \label{Limit probability corolllary}
    Let $X \geq 1$ be an integer. Then we have
    \begin{align*}
        \lim\limits_{k \to \infty} \mathcal P_k\lp X \rp = 1.
    \end{align*}
\end{corollary}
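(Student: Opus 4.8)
The plan is to feed the sufficient condition of Theorem \ref{U-bound theorem} into a counting argument: for $k$ large, the only elements of $\mathcal S_{k,X}$ which could possibly fail the period polynomial Riemann hypothesis are those whose coordinate vector $(c_1,\dots,c_r)$ has vanishing sum, and these form a negligible proportion of all $(2X+1)^r$ such vectors.

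First I would record that $r = \dim S_k \le k/12$ by the valence formula, so $rX = O_X(k)$ grows only linearly in $k$, while $U(k) = \tfrac{0.001865}{4 e^{2\pi}} 2^{k/4}$ grows exponentially. Hence there is an even integer $k_1 = k_1(X) \ge 180$ with $U(k) \ge rX$ for all even $k \ge k_1$. Fix such a $k$ and let $f = c_1 f_1 + \dots + c_r f_r \in \mathcal S_{k,X}$ with $\sum_{j=1}^r c_j \ne 0$; since the $c_j$ are integers we have $\left| \sum_j c_j \right| \ge 1$, and since $|c_j| \le X$ we get
\begin{align*}
    \sum_{j=1}^r \left| c_j \right| \le rX \le U(k) \le U(k) \left| \sum_{j=1}^r c_j \right|,
\end{align*}
so hypothesis \eqref{ineq} of Theorem \ref{U-bound theorem} is met and $r_f(z)$ satisfies the Riemann hypothesis. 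Therefore, for all even $k \ge k_1$,
\begin{align*}
    \mathcal S_{k,X} \setminus \mathcal S_{k,X}^{\mathrm{RH}} \subseteq B_{k,X} := \Big\{ c_1 f_1 + \dots + c_r f_r \ : \ c_j \in \Z, \ |c_j| \le X, \ \textstyle\sum_{j=1}^r c_j = 0 \Big\},
\end{align*}
whence $\mathcal P_k(X) \ge 1 - |B_{k,X}| / (2X+1)^r$.

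It then remains to show $|B_{k,X}| / (2X+1)^r \to 0$ as $k \to \infty$. Viewing $c_1,\dots,c_r$ as i.i.d.\ uniform on $\{-X,\dots,X\}$ (a nondegenerate, mean-zero lattice distribution since $X \ge 1$), this quantity equals $\mathbb{P}(S_r = 0)$ for $S_r := c_1 + \dots + c_r$, and this probability tends to $0$ as $r \to \infty$: writing $\phi(t) = \tfrac{1}{2X+1}\sum_{j=-X}^X e^{ijt}$ for the characteristic function of a single coordinate, one has $\mathbb{P}(S_r = 0) = \tfrac{1}{2\pi}\int_{-\pi}^\pi \phi(t)^r \, dt \to 0$ by dominated convergence, since $|\phi(t)| < 1$ for $0 < |t| \le \pi$; alternatively the local central limit theorem gives the sharper $\mathbb{P}(S_r = 0) \sim \big(2\pi r \cdot \tfrac{X(X+1)}{3}\big)^{-1/2}$. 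Since $r = \dim S_k \to \infty$ as $k \to \infty$, this forces $\mathcal P_k(X) \to 1$, and as $\mathcal P_k(X) \le 1$ always we obtain the claimed limit.

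The argument is essentially bookkeeping once Theorem \ref{U-bound theorem} is available; the one point needing (standard) care is the decay of $\mathbb{P}(S_r = 0)$, which is the crux of the matter. I would also note that the weaker bound $\mathcal P_k(X) \ge 2^{1-r}$ of Corollary \ref{Probability corollary} does not suffice here — it tends to $0$, not $1$ — so it is essential to exploit the full strength of the $U(k)$ criterion rather than merely the sign-alignment trick underlying Corollary \ref{Probability corollary}.
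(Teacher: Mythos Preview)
Your proof is correct and follows the same route as the paper's: use Theorem~\ref{U-bound theorem} to show that for $k$ sufficiently large every $f \in \mathcal S_{k,X}$ with $\sum_j c_j \neq 0$ lies in $\mathcal S_{k,X}^{\mathrm{RH}}$, then argue that the proportion of coefficient tuples with $\sum_j c_j = 0$ vanishes as $r \to \infty$. Your version is in fact more explicit than the paper's on both steps---the paper invokes the $C^\pm$ form of Theorem~\ref{U-bound theorem} and then simply asserts that the zero-sum condition has probability tending to zero ``immediately from the fact that $r \to \infty$'', whereas you verify the uniformity of the threshold $k_1(X)$ via $rX \le U(k)$ and supply the characteristic-function (or local CLT) argument for $\mathbb P(S_r=0)\to 0$.
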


\begin{proof}
    We use the notation of Theorem \ref{U-bound theorem}, we automatically have $f \in \mathcal S_{k,X}^{\mathrm{RH}}$ if $C^+ = 0$ or $C^- = 0$, and so we only consider the case $C^\pm \not = 0$. Suppose for the moment that $f$ has only a simple pole at $i\infty$, so that $C^+ \not = C^-$. Without loss of generality, assume $C^+ > C^-$. By Theorem \ref{U-bound theorem}, $f \in \mathcal S_{k,X}^{\mathrm{RH}}$ if
    \begin{align*}
        C^- \leq \dfrac{U(k) - 1}{U(k) + 1} C^+.
    \end{align*}
    This holds for $k \gg 0$, and so every $f \in \mathcal S_{k,X}$ with a simple pole at $i\infty$ lies in $\mathcal S_{k,X}^{\mathrm{RH}}$ if $k$ is sufficiently large. It thus only remains to show that the condition that $f$ has a simple pole at $i\infty$ has probability 1 as $k \to \infty$. Since $f = c_1 f_1 + \dots + c_r f_r$ has a simple pole at infinity if and only if $c_1 + \dots + c_r \not = 0$, the result follows immediately from the fact that $r \to \infty$ as $k \to \infty$.
\end{proof}

\section{Further directions} \label{Further directions}

\subsection{Obstructions to unimodularity}

Although Theorem \ref{Main Theorem} gives a sufficient condition for unimodularity that in principle applies to any cusp form, it is not a necessary condition. In practice, Theorem \ref{Main Theorem} does not work well when $k$ is small or $N \geq 2$. Indeed, observe that even though we prove a version of Proposition \ref{H zeros theorem} which works quite well for $N \geq 2$, the asymptotic properties of \eqref{Key inequality 0} are very different for different $N$; indeed, if $C_f$ is bounded and $N \geq 1$ is fixed, then \eqref{Key inequality 0} is true for $k \gg 0$ if and only if $N = 1$. One way to try to rectify the situation would be to look only at $f$ whose value of $C_f$ is very small. This, however, also does not work. To see this, consider a cusp form $f = q^N + \sum_{n \geq N+1} c(n) q^n$ of weight $k$ and $N \geq 2$. When the assumption $\left| c(n) \right| \leq C_f \sigma_0(n) n^{\frac{k-1}{2}}$ is applied to the case $c\lp N \rp = 1$, we have a lower bound $C_f \geq \lp \sigma_0(N) N^{\frac{k-1}{2}} \rp^{-1}$. Now, if we fix $N \geq 2$, in order for \eqref{Key inequality 0} to hold as $k \to \infty$, we require that $4 e^{2\pi} C_f \lp N+1 \rp^{-k/4}$ decay to zero faster than $N^{-(k-1)}$. This would require $C_f = o\lp N^{- (3k/4 - 1)} \rp$, which the previous lower bound on $C_f$ prevents. If one allows $N$ to increase as a function of $k$, the situation becomes even worse than this, as both the first and second terms on the left-hand side of \eqref{Key inequality 0} no longer decay sufficiently quickly as $k \to \infty$. We also note in passing that when the dimension of $S_k$ is larger than 1, normalized cusp forms $f \in S_k$ can have arbitrarily large values for $C_f$; this is a consequence of the existence of the Sturm basis for $S_k$. Thus, it does not appear that Theorem \ref{Main Theorem} can be used on its own to determine precisely which elements of $f \in S_k$ have period polynomials satisfying the Riemann hypothesis.

Therefore, if we wish to study the Riemann hypothesis for period polynomials when $f$ has an order of vanishing $N \geq 2$ or in families of cusp forms with variable orders of vanishing, either Theorem \ref{Main Theorem} must be improved, or a new method must be developed. As there does not seem to be any better way to choose the polynomials $q_f(z)$, the most plausible way to improve Theorem \ref{Main Theorem} would be to improve the estimates in Lemma \ref{L-function lemma}. It is in this lemma where we can see the reason that Theorem \ref{Main Theorem} is only useful for $N = 1$; this is because for a normalized cusp form $f$, the approximation $L\lp f, s \rp \approx N^{-s}$ for $s \gg 0$ is only a good approximation when $N = 1$. This is because in a Dirichlet series $\sum_{n=1}^\infty a(n) n^{-s}$, only the term $n=1$ is independent of $s$, and all other terms decay to 0 as $s \to \infty$. Thus, if $N \geq 2$, the ``main term" $N^{-s}$ and the ``error term" $L\lp f,s \rp - N^{-s}$ both decay to 0 with a similar speed as $s \to \infty$. Therefore, in order to prove stronger results on the zeros of period polynomials, one should produce non-trivial improvements of Lemma \ref{L-function lemma}. In particular, only the first part of Lemma \ref{L-function lemma} needs to be stronger if one wishes to study the case where $N \geq 2$ is fixed, and both need to be improved in order to address families of cusp forms where $N \to \infty$ as $k \to \infty$.

\subsection{Bounding the constant $C_f$}

For any arbitrary form $f = \sum_{n \geq 1} c(n) q^n$, the problem of giving explicit estimates for the coefficients of $f$ is very difficult. If $f$ is a normalized eigenform, the Deligne bound says that $\left| c(n) \right| \leq \sigma_0(n) n^{\frac{k-1}{2}}$. For a general cusp form $f$, the fact that the spaces $S_k$ always have a basis of eigenforms implies that there is some constant $C_f$ such that $\left| c(n) \right| \leq C_f \sigma_0(n) n^{\frac{k-1}{2}}$. In particular, if $f_1, \dots, f_r$ is the eigenbasis of $S_k$ and if $f = c_1 f_1 + \dots + c_r f_r$ for $c_j \in \C$, then we have $C_f \leq \sum_{j=1}^r \left| c_j \right|$. Although upper bounds are difficult, it is quite straightforward to show that among normalized cusp forms $f$, $C_f$ can take on arbitrarily large values as long as the space $S_k$ has a dimension greater than 1. We are therefore only concerned with computing upper bounds on $C_f$ which depend on some properties of $f$.

There are various approaches in the literature for obtaining bounds on $C_f$, we refer the reader to \cite{spyenirce} for a list of some references. Although many references contain only inexplicit estimates, we mention two results which give bounds that are totally explicit. One such result comes from the work of Schulze-Pilot and Yenirce \cite{spyenirce}, who compute bounds for the Fourier coefficients of newforms using Petersson products. In particular, they show that if
\begin{align*}
		\left|c(n)\right| \leq 2 \sqrt{\pi} e^{2\pi} \sqrt{\langle f, f \rangle} \sigma_0(n) n^{\frac{k-1}{2}},
\end{align*}
where $\sigma_0(n)$ is the number of divisors of $n$ and $\langle f, g \rangle$ is the Petersson inner product of $f,g \in S_k$ of level 1. We note briefly that the full result of \cite{spyenirce} is stated for cusp forms on $\Gamma_0(N)$ with Nebentypus. Another quite different approach can be found in the work of Jenkins and Rouse \cite{jenkinsrouse}, who bound the Fourier coefficients of a cusp form by its first $r$ coefficients. In particular, they prove that
\begin{align*}
    \left| c(n) \right| \leq \sqrt{\log(k)} \lp 11 \sqrt{\sum_{m=1}^r \dfrac{|c(m)|^2}{m^{k-1}}} + \dfrac{e^{18.72} \lp 41.41 \rp^{k/2}}{k^{(k-1)/2}} \left| \sum_{m=1}^r c(m) e^{-7.288m} \right| \rp \sigma_0(n) n^{\frac{k-1}{2}}.
\end{align*}
We note that although the formula of \cite{jenkinsrouse} is only written for $\mathrm{SL}_2\lp \Z \rp$, it has the benefit that the role played by $N$ in bounding $C_f$ is completely explicit. Alongside a suitable improvement of Theorem \ref{Main Theorem}, this could be used to study cusp forms with a large order of vanishing at infinity.

\subsection{Zeta polynomials}

We describe briefly the application of the unimodularity of period polynomials to the construction of zeta polynomials in the sense of Manin \cite{manin_zetapoly} and Ono, Rolen, and Sprung \cite{onorolensprung}.
\begin{definition}
    A {\it zeta polynomial} is a polynomial $Z(s)$ which satisfies a functional equation $Z(s) = \pm Z(1-s)$, has all its zeros on the line $\mathrm{Re}\lp \rho \rp = \frac 12$, and have an arithmetic-geometric interpretation.
\end{definition}
In \cite{manin_zetapoly}, Manin constructs zeta polynomials from the odd parts of period polynomials, and Ono, Rolen, and Sprung construct zeta polynomials from the full period polynomials of eigenforms. The key ingredients in these proofs are the Riemann hypothesis for the (odd or full) period polynomials along with a criterion of Rodriguez-Villegas \cite{rodrigovillegas} that transforms unimodular polynomials into zeta polynomials. To explain the relevant results, let $U \in \C[x]$ be a polynomial of degree $d$ with $U(1) \not = 0$, and let the {\it Rodriguez-Villegas transform} $Z(s)$ of $U(x)$ be the unique polynomial such that
\begin{align*}
    \dfrac{U(x)}{\lp 1-x \rp^{d+1}} = \sum_{n \geq 0} Z\lp -n \rp x^n.
\end{align*}
In \cite[Theorem 2.1]{onorolensprung}, it is shown that if $U(x)$ is unimodular with real coefficients, then $Z(s)$ has all its zeros on $\mathrm{Re}\lp \rho \rp = \frac 12$ and satisfies the functional equation $Z(s) = (-1)^d Z(1-s)$. Using the above criterion, certain polynomials $Z_f(s)$ defined in \cite[Equation 1.3]{onorolensprung} are shown to be zeta polynomials. Since every step in their method is valid for more general cusp forms, we see that any cusp for $f$ for which $r_f(z)$ satisfies the Riemann hypothesis can be used to construct a zeta polynomial.

\subsection{Conjectures and open problems}

After \cite{conreyfarmerimamoglu, elguindyraji} established the Riemann hypothesis for odd and full period polynomials for $\mathrm{SL}_2\lp \Z \rp$ modular forms, many related directions have developed in the literature. In this final section, we discuss a variety of results, conjectures, and open problems which naturally flow from known results and our work.

\subsubsection{Other modular objects}

One of the most natural directions in which to generalize the results of \cite{conreyfarmerimamoglu, elguindyraji} is to cusp forms with level $N > 1$ and Nebentypus $\chi$. This has been accomplished for newforms $f \in S_k\lp \Gamma_0(N), \chi \rp$ with even weight $k \geq 4$ and trivial Nebentypus by Jin, Ma, Ono, and Soundararajan \cite{jinmaonosound} and for almost all newforms of any weight $k \geq 3$ and any Nebentypus by Liu, Park, and Song \cite{liuparksong}. It would be interesting to prove generalizations of the results in this paper to the setting of the spaces $S_k\lp \Gamma_0(N), \chi \rp$. These papers also prove equidistribution results for zeros of eigenforms as $k$ or $N$ grow, and it would be natural to ask similar distribution questions about the distribution of zeros in more general settings.

In light of \cite{conreyfarmerimamoglu}, the odd parts of the period polynomials of general cusp forms would also be interesting to investigate, and we expect that these should also satisfy an appropriate Riemann hypothesis for large families of cusp forms.

\subsubsection{Special families of cusp forms}

One reason why previous literature has focused on the period polynomials of eigenforms and newforms is that they are very natural and beautiful examples of cusp forms. The Fourier coefficients of a normalized newform $f(z) = \sum_{n \geq 1} c(n) q^n \in S_k\lp \Gamma_0(N), \chi \rp$ are multiplicative and the sequences $\{ c(p^n) \}_{n \geq 0}$ satisfy a two-term linear recurrence relation. These properties have many very beautiful implications, one of which is that the $L$-function $L\lp f,s \rp$ of a newform has an Euler product, which the authors of \cite{jinmaonosound} use to estimate ratios of critical $L$-values. It would be natural to study other special families of cusp forms to determine whether their period polynomials (resp. odd period polynomials) satisfy the Riemann hypothesis; one might consider, for example, CM forms or powers of eigenforms. For example, although Theorem \ref{Main Theorem} is not sufficiently refined to treat period polynomials of $\Delta^a$ for $a \gg 0$, explicit computations of the period polynomials of $\Delta^a$ for small $a$ suggests that very often these satisfy the Riemann hypothesis. It would also be natural to ask whether modular forms with maximal orders of vanishing at infinity (for their respective spaces $S_k$) have unimodular period polynomials. We leave these questions open for future investigation.

\subsubsection{Probabilities for fixed weights}

Corollary \ref{Limit probability corolllary} tells us that for $X \geq 1$ an integer, $\mathcal P_k\lp X \rp \to 1$ as $k \to \infty$, that is, that asymptotically 100\% of cusp forms which are ``linear combinations of a small number of eigenforms" satisfy the Riemann hypothesis. Questions about the behaviour of $\mathcal P_k\lp X \rp$ for small values of $k$ or when $X \to \infty$ are also natural. However, our methods do not seem to apply immediately to prove such results. The most we can prove is that $\liminf\limits_{X \to \infty} \mathcal P_k\lp X \rp > 0$ for $k \gg 0$, which is a consequence of Corollary \ref{Probability corollary}. We pose two questions related to $\mathcal P_k\lp X \rp$ which would make for interesting directions for future study.

\begin{question}
    For fixed integers $k$, do the limits
    \begin{align*}
        \lim_{X \to \infty} \mathcal P_k\lp X \rp
    \end{align*}
    exist? If so, are they equal to 1?
\end{question}

\begin{question}
    Let $12 \leq k < 120$ be an integer and $f = c_1 f_1 + \dots + c_r f_r \in S_k$, where $c_j \geq 0$ and $f_1, \dots, f_r$ is the normalized eigenbasis of $S_k$. Does the period polynomial $r_f(z)$ satisfy the Riemann hypothesis?
\end{question}

Not much can be shown regarding these questions using our method. For the first question, lower bounds on the $\liminf_{X \to \infty} \mathcal P_k\lp X \rp$ can be derived for $k \gg 0$ from Corollary \ref{Limit probability corolllary}, but these lower bounds are probably not optimal and do not imply that these limits even converge. For the second question, the answer is affirmative for any $k$ for which $S_k$ is one-dimensional by \cite{elguindyraji}, but nothing else seems to be known.

\end{document}